\documentclass[11pt]{article}

\usepackage{amssymb,amsmath,amsthm}
\usepackage{epsfig}
\usepackage{breqn}
\usepackage{footnpag}
\usepackage{rotating}
\usepackage{amsfonts}
\usepackage{setspace}
\usepackage{fullpage}
\usepackage{enumitem}
\usepackage{bbold} 
\usepackage{comment}
\usepackage{pgf,tikz}
\usepackage{mathtools}  
\usepackage{hyperref}
\mathtoolsset{showonlyrefs} 

\bibliographystyle{plain}

\newtheorem{thm}{Theorem}
\newtheorem{claim}[thm]{Claim}
\newtheorem{lem}[thm]{Lemma}
\newtheorem{cor}[thm]{Corollary}
\newtheorem*{defi}{Definition}

\newtheorem{conj}[thm]{Conjecture}
\newtheorem{obs}[thm]{Observation}


\def\ul{\underline}

\newcommand{\F}{\mathcal{F}}

\newcommand{\Q}{\mathcal{Q}}

\newcommand{\abs}[1]{\left\lvert{#1}\right\rvert}

\newcommand{\bfly}[1]{
\begin{tikzpicture}[x=1cm, y=1cm, scale = .1]
\draw (0,0)-- (0,2);
\draw (2,0)-- (2,2);
\draw (0,0)-- (2,2);
\draw (0,2)-- (2,0);
\fill [color=black] (0,0) circle (10pt);
\draw[color=black] (1,1) node {};
\fill [color=black] (0,2) circle (10pt);
\draw[color=black] (1,1) node {};
\fill [color=black] (2,0) circle (10pt);
\draw[color=black] (1,1) node {};
\fill [color=black] (2,2) circle (10pt);
\draw[color=black] (1,1) node {};
\end{tikzpicture}
}

\def\Times{\!\!\times\!\!.. \!\!\times\!\!}

\begin{document}

\title{Forbidden hypermatrices imply general bounds on\newline induced forbidden subposet problems}
\author{Abhishek Methuku\footnote{ Department of Mathematics, Central European University, Budapest. e-mail: \href{mailto:abhishekmethuku@gmail.com}{abhishekmethuku@gmail.com}} \and D\"om\"ot\"or P\'alv\"olgyi\footnote{Institute of Mathematics, E\"otv\"os University, Budapest.
Research supported by Hungarian National Science Fund (OTKA), grant PD 104386 and the J\'anos Bolyai Research Scholarship of the Hungarian Academy of Sciences. 
e-mail: \href{mailto:dom@cs.elte.hu}{dom@cs.elte.hu}}}
\maketitle

\begin{abstract}
We prove that for every poset $P$, there is a constant $C$ such that the size of any family of subsets of $[n]$ that does not contain $P$ as an induced subposet is at most $C{\binom{n}{\lfloor\frac{n}{2}\rfloor}}$, settling a conjecture of Katona, and Lu and Milans \cite{Milans}.
We obtain this bound by establishing a connection to the theory of forbidden submatrices and then applying a higher dimensional variant of the Marcus-Tardos theorem, proved by Klazar and Marcus. 
We also give a new proof of their result.
\end{abstract}

\section{Introduction}
We are interested in the largest family of subsets of $[n] := \{1,2,\ldots, n \}$ avoiding certain subposets. 

Our main result is the following.

\begin{thm}
\label{mainthm}
For every poset $P$, there is a constant $C$ such that the size of any family of subsets of $[n]$ that does not contain an induced copy of $P$ is at most $C {\binom{n}{\lfloor\frac{n}{2}\rfloor}}.$
\end{thm}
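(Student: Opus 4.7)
The plan is to reduce the induced forbidden subposet problem to a forbidden hypermatrix problem and then invoke the higher-dimensional Marcus--Tardos theorem of Klazar and Marcus. The factor $\binom{n}{\lfloor n/2\rfloor}$ should arise naturally from a symmetric chain decomposition of $2^{[n]}$.

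First, I would fix a symmetric chain decomposition $\mathcal{C} = \{C_1,\ldots,C_N\}$ of the Boolean lattice, where $N := \binom{n}{\lfloor n/2\rfloor}$, so that each $A \in \F$ is associated to a unique chain $c(A)$ and a level $\ell(A) = |A|$. Every family with ``bounded density'' on each chain has size $O(N)$, so the target reduces to controlling this density. Next, I would encode $\F$ into a $d$-dimensional 0/1 hypermatrix $\widetilde{M}$ of side $N$, where $d$ depends on $|P|$: coordinates are chains of $\mathcal{C}$ (roughly one dimension per element of $P$), and a $1$ is placed at a tuple $(c_1,\ldots,c_d)$ when suitably chosen representatives from those chains -- each belonging to $\F$ -- ``witness'' a potential induced copy of $P$. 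A careful encoding should ensure that each $A \in \F$ contributes $\Theta(N^{d-2})$ entries, so that the total count of $1$'s is $\Theta(|\F|\cdot N^{d-2})$.

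The central difficulty -- and the step I expect to be hardest -- is showing that if $\F$ is induced-$P$-free, then $\widetilde{M}$ avoids a fixed pattern $Q_P$ depending only on $P$. The induced condition is genuinely harder than the non-induced one: one must forbid not only some prescribed set of inclusions, but also guarantee the presence of prescribed \emph{incomparabilities}. My expectation is that the extra dimensions are what allow this: chains within $\mathcal{C}$ produce comparabilities, while distinct chains encode incomparabilities, so recording the chain memberships of several ``witness'' sets simultaneously should let both types of relations in $P$ be read off from the coordinates. The pattern $Q_P$ would then be the fixed configuration of $1$'s in a $[|P|]^d$ sub-hypermatrix that corresponds exactly to an induced occurrence of $P$.

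Finally, once $\widetilde{M}$ is shown to be $Q_P$-free, the Klazar--Marcus theorem gives that the number of $1$-entries is $O_P(N^{d-1})$. Dividing by the encoding multiplicity $\Theta(N^{d-2})$ per element of $\F$ yields $|\F| \le C_P \cdot N = C_P \binom{n}{\lfloor n/2\rfloor}$, which is the desired bound.
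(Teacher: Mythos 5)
Your high-level plan (encode the family into a $d$-dimensional $0$--$1$ hypermatrix, show induced-$P$-freeness forces avoidance of a fixed pattern, apply Klazar--Marcus, then divide out the multiplicity) is the right spirit, and it matches the paper's strategy in outline. But the step you yourself flag as the central difficulty is exactly the step that is missing, and the scaffolding you chose cannot deliver it. If the coordinates of $\widetilde M$ are the chains of a symmetric chain decomposition (in some arbitrary order), then containment of a pattern $Q_P$ in $\widetilde M$ constrains only the \emph{relative order of the chain indices} of the witnessing tuples; it carries no information about inclusion or incomparability between sets lying on \emph{different} chains. Two sets on distinct chains of a symmetric chain decomposition can be comparable or incomparable, and which of the two happens is not determined by the indices of those chains, so no fixed pattern $Q_P$ can ``correspond exactly to an induced occurrence of $P$.'' Your hope that ``chains produce comparabilities, distinct chains encode incomparabilities'' is false in the second half, and this is precisely where the induced condition (which needs prescribed incomparabilities as well as inclusions) breaks the reduction. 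Relatedly, Theorem \ref{hypermarcustardos} gives the $O(n^{d-1})$ bound only for \emph{permutation} $d$-matrices; you never arrange for your pattern to be one, and for general patterns no such bound holds. Finally, the claim that each $A\in\F$ contributes $\Theta(N^{d-2})$ ones is unsupported, since the encoding is never actually specified.

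The paper resolves exactly this difficulty by replacing the symmetric chain decomposition with a structure in which set containment \emph{is} coordinate-wise domination: a permutation $d$-partition $\Q=Q_1|\cdots|Q_d$ (with $d$ the order dimension of $P$, not $|P|$) and its prefix unions $\Q[\ul i)$, which satisfy $\Q[\ul i')\subseteq\Q[\ul i)$ iff $i'_j\le i_j$ for all $j$. Hence both the comparabilities and the incomparabilities of $P$ are readable from the coordinates, and the forbidden pattern is the permutation $d$-matrix $M_P$ obtained from a realizer of $P$ (Observation \ref{poset2matrix}), to which Klazar--Marcus applies. The role of your division by the multiplicity is played by a double count of pairs $(\Q,F)$ with $F$ a prefix union of $\Q$ (Claims \ref{countP} and \ref{countA}), which produces a shifted Lubell-type inequality and hence the factor $\binom{n}{\lfloor n/2\rfloor}$. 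To repair your argument you would need to replace the chain-indexed matrix by some indexing in which the order of indices determines inclusion -- at which point you have essentially been led back to the paper's construction.
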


To establish this theorem, we use a method that is a certain generalization of the so-called {\em circle method} of Katona \cite{Circlemethod}.
That is, we double count pairs $(\Q,F)$ that satisfy a certain property where $\Q$ is a family of subsets built from a partition of permutation of $[n]$.
What is novel in our paper is that we associate a $d$-dimensional $0-1$ hypermatrix to $\Q$ and we establish a connection to the theory of forbidden submatrices with the following lemma.

\begin{lem}\label{matrix2poset}
For every $d$-dimensional poset $P$ on $k$ elements, there is a $d$-dimensional $k\Times k$ permutation hypermatrix $M_P$ such that if every $M_P$-free $d$-dimensional $n\Times  n$ size $0-1$ hypermatrix has at most $Kn^{d-1}$ non-zero elements, then
$La^{\#}(n, P) \le 2^dK\binom n{\lfloor \frac n2 \rfloor}$.
\end{lem}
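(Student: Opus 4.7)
The plan is to reduce the induced-subposet extremal problem to the forbidden-hypermatrix problem by a $d$-dimensional version of Katona's circle method. First I would construct $M_P$ from the $d$-dimensional embedding of $P$: by definition of dimension, there exist $d$ linear extensions $L_1,\dots,L_d$ of $P$ whose intersection realizes $P$, yielding an injection $\varphi\colon P\hookrightarrow [k]^d$, $\varphi(p)=(L_1(p),\dots,L_d(p))$, each coordinate of which is a permutation of $[k]$; in particular $p\le_P q$ iff $\varphi(p)\le\varphi(q)$ coordinatewise. The indicator hypermatrix of $\varphi(P)\subseteq [k]^d$ is then a $d$-dimensional $k\Times k$ permutation hypermatrix, which we take as $M_P$.

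Next, fix an induced-$P$-free family $\F\subseteq 2^{[n]}$. The key is to cook up a family of ``test configurations'' $\sigma$ --- each $\sigma$ being, roughly, a permutation $\pi$ of $[n]$ together with some further partition data, yielding a distinguished subfamily $\Q_\sigma \subseteq 2^{[n]}$ --- and to assign to each $\sigma$ a $d$-dimensional $n\Times n$ $0$--$1$ hypermatrix $A_\sigma$ whose $1$-positions record where members of $\F \cap \Q_\sigma$ sit along each coordinate axis relative to $\Q_\sigma$. Designed correctly, the appearance of an $M_P$-pattern inside $A_\sigma$ is read back as $k$ subsets in $\F$ whose pairwise containments match those of $\varphi(P)$ in $[k]^d$, and hence form an induced copy of $P$. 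Under this design, $A_\sigma$ is $M_P$-free and, by hypothesis, has at most $Kn^{d-1}$ non-zero entries.

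The last step is a double count of $\sum_\sigma |A_\sigma|$. One side yields $(\#\sigma)\cdot Kn^{d-1}$; the other is a sum $\sum_{F\in\F} w(F)$, where $w(F)$ depends only on $|F|$ and $n$, exactly as the factor $|F|!(n-|F|)!$ in Katona's original method. The minimum of $w(F)$ is attained at $|F|=\lfloor n/2\rfloor$, and rearranging produces an LYM-type bound of the form $|\F|\le 2^d K \binom{n}{\lfloor n/2\rfloor}$. The factor $2^d$ should arise from duplicating the signature along each of the $d$ coordinate directions (into a ``low'' and a ``high'' half), which is precisely what lets the hypermatrix record enough information to certify the \emph{induced} rather than weak containment pattern.

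The main obstacle will be the precise choice of the test configurations $\sigma$ and the signature map so that an $M_P$-pattern in $A_\sigma$ really forces an induced copy of $P$, not just a non-induced one. This is delicate because each incomparable pair in $\varphi(P)$ must translate into a mutually non-containing pair of subsets, so the signature has to carry coordinate-wise information in each of the $d$ directions simultaneously; this is also exactly where the extra factor $2^d$ enters.
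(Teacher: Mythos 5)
Your high-level plan --- building $M_P$ from a $d$-dimensional realization of $P$, attaching a $0$--$1$ hypermatrix to each permutation-based ``test configuration'', and finishing with an LYM-type double count --- is indeed the paper's strategy, but the proposal stops exactly where the real work lies, and that is a genuine gap. The missing idea is the \emph{permutation $d$-partition}: an ordered partition $\Q=Q_1|Q_2|\ldots|Q_d$ of a permutation of $[n]$ into $d$ (possibly empty) parts, whose \emph{prefix unions} $\Q[\ul i)=\bigcup_{j=1}^d Q_j[i_j)$ (the first $i_j-1$ elements of each part) serve as the candidate sets. The hypermatrix attached to $\Q$ has size $(\abs{Q_1}+1)\Times(\abs{Q_d}+1)$, with a $1$ at position $\ul i$ exactly when $\Q[\ul i)\in\F$. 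Because the parts are disjoint, $\Q[\ul i')\subseteq\Q[\ul i)$ holds if and only if $i_j'\le i_j$ for all $j$, so both comparabilities \emph{and} incomparabilities among prefix unions are read off from the index vectors; this single fact is what turns an occurrence of $M_P$ in $M_{\Q}$ into an \emph{induced} copy of $P$ in $\F$. You explicitly flag the design of the configurations and of the signature map as ``the main obstacle'', i.e.\ the step that certifies induced (not merely weak) containment is not supplied, so the argument as written does not yet prove the lemma.

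Two further points. Your proposed source of the factor $2^d$ (``duplicating the signature into a low and a high half along each direction'') is not what a correct completion needs and is not where the constant comes from: in the actual count the loss comes from the $d-1$ separators, which shift the weights to $\binom{n+2d-2}{\abs{F}+d-1}$; for fixed $F$ there are $\frac{(\abs{F}+d-1)!}{(d-1)!}\cdot\frac{(n-\abs{F}+d-1)!}{(d-1)!}$ suitable partitions, and combining this with $\binom{n+2d-2}{\lfloor n/2\rfloor+d-1}\le 4^{d-1}\binom{n}{\lfloor n/2\rfloor}$ and the factor $\frac{(d-1)!}{(d-1)^{d-1}}$ gives, via Stirling, a constant below $2^dK$. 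Also, the matrices arising from the partitions are rectangular of size $(\abs{Q_1}+1)\Times(\abs{Q_d}+1)$ rather than $n\Times n$, so one additionally needs the averaging/monotonicity step (Claim \ref{exmon} and Corollary \ref{square2rectangle}) to convert the hypothesis $ex_d(n,M_P)\le Kn^{d-1}$ into a bound of the form $K\bigl(\frac{n+d-1}{d-1}\bigr)^{d-1}$ for these shapes; this step is absent from your outline.
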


Then we apply the following theorem, which is a higher dimensional variant of the Marcus-Tardos theorem \cite{MarTar} about forbidden submatrices.

\begin{thm}[Klazar-Marcus \cite{KlaMar}]\label{hypermarcustardos} If a $d$-dimensional $n\Times  n$ size $0-1$ hypermatrix does not contain a given $d$-dimensional $k\Times  k$ permutation hypermatrix, then it has at most $O(n^{d-1})$ non-zero elements.
\end{thm}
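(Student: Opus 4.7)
The plan is to generalize the proof of the Marcus--Tardos theorem to $d$ dimensions, by induction on $n$. Let $f(n)$ denote the maximum number of non-zero entries in a $d$-dimensional $n \times \cdots \times n$ $0$--$1$ hypermatrix avoiding a fixed $k \times \cdots \times k$ permutation hypermatrix $P$. I would first partition $[n]^d$ into blocks of side length $s$, where $s = s(k,d)$ is a constant to be chosen (e.g.\ $s = k^2$ suffices, as the recursion at the end shows), producing an $N \times \cdots \times N$ grid of blocks with $N = n/s$. For each direction $j \in [d]$, call a block $j$-\emph{heavy} if the non-zero entries it contains occupy at least $k$ distinct $j$-th coordinates, and call it \emph{light} if it is not $j$-heavy for any $j$. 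A light block contains at most $(k-1)^d$ non-zero entries.

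The main technical step is the claim that for each $j$, the total number of $j$-heavy blocks is $O(N^{d-1})$. I would prove this by considering axis-parallel block-lines in direction $j$, obtained by fixing every block coordinate except the $j$-th, and arguing that each such line contains at most a constant number of $j$-heavy blocks (depending only on $k,d$). The argument is a pigeonhole on \emph{signatures}: each $j$-heavy block on the line determines a size-$k$ subset of the $s$ possible within-block $j$-coordinates at which a non-zero entry occurs. If a line has more $j$-heavy blocks than a suitable threshold, then some $k$ of them share a common signature; iterating the pigeonhole on the remaining $d-1$ coordinates lets one select from these $k$ blocks a set of $k$ non-zero entries whose coordinates realize precisely the permutation pattern of $P$, contradicting $P$-freeness. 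Summing over the $N^{d-1}$ block-lines in each of the $d$ directions gives $O(N^{d-1})$ heavy blocks in total.

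The recursion then closes in the standard Marcus--Tardos way. Define the \emph{compressed} hypermatrix $M' \in \{0,1\}^{N^d}$ by $M'[\mathbf{I}] = 1$ iff block $\mathbf{I}$ of $M$ contains a non-zero entry; any copy of $P$ in $M'$ lifts to a copy of $P$ in $M$, so $M'$ is $P$-free and has at most $f(N)$ non-zero entries by induction. Splitting the $1$-entries of $M$ into those lying in heavy blocks (at most $O(N^{d-1})$ blocks, each with at most $s^d$ entries) and those lying in light blocks (at most $f(N)$ such blocks, each with at most $(k-1)^d$ entries) yields
\[
f(n) \le (k-1)^d\, f(N) + C(k,d)\, N^{d-1},
\]
and choosing $s$ so that $s^{d-1} > (k-1)^d$ makes the induction close under the ansatz $f(n) \le A n^{d-1}$, giving $f(n) = O(n^{d-1})$. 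The principal obstacle is the combinatorial step described above: realizing the \emph{specific} permutation pattern of $P$, rather than some arbitrary pattern, from $k$ heavy blocks sharing a signature. This requires iterating the pigeonhole through all $d-1$ remaining coordinate directions, giving a threshold on the number of heavy blocks per line that depends on $k$ and $d$ in a tower-like fashion, but the dependence is absorbed into the implicit constant in $O(n^{d-1})$.
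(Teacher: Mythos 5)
Your overall scaffolding (blocking, the compressed block-indicator matrix, light blocks having at most $(k-1)^d$ ones, and the recursion $f(n)\le (k-1)^d f(n/s)+C(k,d)(n/s)^{d-1}$) is the right Marcus--Tardos framework, but the central claim is false as stated, and it is exactly the step where the whole difficulty of the theorem sits. It is not true that a block-line in direction $j$ contains only a bounded number of $j$-heavy blocks. Already for $d=2$: let every $1$-entry of $M$ lie in a single row, with that row completely filled. Then $M$ avoids every $k\times k$ permutation matrix with $k\ge 2$ (no two ones lie in distinct rows), yet every block of the corresponding block-row spans $s\ge k$ distinct column coordinates, so the block-line in the column direction contains $n/s$ $j$-heavy blocks, all even sharing the same signature. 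This also pinpoints why the pigeonhole step cannot be repaired: the $k$ blocks sharing a signature agree in every block coordinate except the $j$-th, so entries chosen from them are separated along axis $j$ automatically (signature or not), while nothing whatsoever controls their relative positions in the other $d-1$ axes; ``iterating the pigeonhole'' on those coordinates can only force positions to coincide, which is the opposite of the pairwise-distinct, correctly ordered coordinates a permutation pattern requires. Note also that your pairing is inverted relative to Marcus--Tardos: there, blocks spanning many \emph{columns} are counted along block-\emph{columns}, i.e.\ richness is measured orthogonally to the line. But even fixing the pairing does not save the weak notion of heaviness once $d\ge 3$: placing all ones in a single coordinate hyperplane orthogonal to a third axis $r\notin\{j,j'\}$ yields an $A$-free matrix in which some block-line in direction $j'$ consists entirely of $j$-heavy blocks, so no choice of ``heaviness direction versus line direction'' gives a per-line constant bound.

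The fix used in the paper (and by Klazar--Marcus) is to strengthen the notion of a rich block and to induct on the dimension: a block $S$ is called $i$-wide if its projection $\mathrm{Proj}_i S$ along axis $i$ contains the $(d-1)$-dimensional permutation pattern $\mathrm{Proj}_i A$, and $i$-wide blocks are counted along block-lines parallel to axis $i$. Then at most $(k-1)\binom{s^{d-1}}{k}$ blocks per line can be $i$-wide, because $k$ of them carrying copies of $\mathrm{Proj}_i A$ in the same $k$ positions of the projected $s^{d-1}$ grid can be assembled, in the order they appear along axis $i$, into a copy of the specific pattern $A$ --- this is the point where the prescribed permutation is actually realized, which your signature argument never achieves. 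Blocks that are $i$-wide for no $i$ are then handled by the induction hypothesis in dimension $d-1$ (each projection has $O(s^{d-2})$ ones) combined with the Loomis--Whitney inequality; your ``light'' blocks are a more restrictive class, so that part of your argument is fine, but without the projected-pattern notion (or some substitute supplying control in the remaining $d-1$ coordinates) the heavy-block count, and hence the proof, does not go through.
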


Notice that Theorem \ref{mainthm} follows from Lemma \ref{matrix2poset} and Theorem \ref{hypermarcustardos}.

The organization of this paper is the following.
In the rest of this section we survey results about forbidden weak and induced posets and forbidden submatrices.
In Section \ref{sec:matrix2poset} we prove our main result, Lemma \ref{matrix2poset} that establishes the connection between the two theories.
In Section \ref{sec:hypermarcustardos} we prove Theorem \ref{hypermarcustardos} to make our paper self-contained, and also because our proof substantially differs from the proof in \cite{KlaMar} (in fact we have learned about their result only after independently proving Theorem \ref{hypermarcustardos}).
Finally, we make some further observations and conjectures in Section \ref{sec:concluding}.

\numberwithin{thm}{section}
\subsection{Forbidden weak subposets}
\begin{defi}
Let $P$ be a finite poset, and $\F$ be a family of subsets of $[n]$.
We say that $P$ is contained in $\F$  as a {\em weak} subposet if and only if there is an injection $\alpha : P \rightarrow \mathcal{F}$ satisfying $x_1 <_p x_2 \Rightarrow \alpha(x_1)\subset \alpha(x_2)$ for all $x_1,x_2\in P$.
$\F$ is called $P$-free if $P$ is not contained in $\F$ as a weak subposet.  
We define the corresponding extremal function as
$$ La(n,P) := \max \{ \abs{\F} ~ \mid~ \mathcal{F} ~\text{is $P$-free}\}.$$      
\end{defi}

We denote the number of elements of a poset $P$ by $|P|$.
The linearly ordered poset on $k$ elements, $a_1 < a_2 < \ldots < a_k$, is called a chain of length $k$, and is denoted by $P_k$. 
Using our notation Sperner's theorem can be stated as follows.

\begin{thm} [Sperner \rm \cite{Sper}]
\begin{equation*}
La(n, P_2) = {\binom {n} {\lfloor n/2 \rfloor}}.
\end{equation*}
\end{thm}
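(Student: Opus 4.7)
The plan is to prove Sperner's theorem by exhibiting an antichain of the claimed size and then establishing a matching upper bound via a double-counting argument over maximal chains (essentially the chain / circle method that the paper attributes to Katona).

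For the lower bound, take $\F$ to consist of all subsets of $[n]$ of size $\lfloor n/2 \rfloor$. Any two distinct sets of the same size are incomparable under inclusion, so $\F$ is $P_2$-free, and $|\F| = \binom{n}{\lfloor n/2 \rfloor}$.

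For the upper bound, I would count pairs $(\pi, F)$ where $\pi$ is a permutation of $[n]$ and $F$ belongs to $\F$ and lies on the maximal chain
$$C_\pi = \bigl\{\emptyset,\ \{\pi(1)\},\ \{\pi(1),\pi(2)\},\ \ldots,\ [n]\bigr\}.$$
For each fixed $F \subset [n]$ with $|F| = k$, the number of permutations $\pi$ with $F \in C_\pi$ equals $k!\,(n-k)!$, since one freely orders the elements of $F$ followed by the elements of $[n]\setminus F$. Summing over $F \in \F$ counts the pairs one way. On the other hand, because $\F$ is a $P_2$-free family, i.e.\ an antichain, every maximal chain $C_\pi$ contains at most one element of $\F$; hence the total number of pairs is at most $n!$. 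Combining the two counts yields the LYM inequality
$$\sum_{F \in \F} \frac{1}{\binom{n}{|F|}}\ \le\ 1.$$
Since $\binom{n}{|F|} \le \binom{n}{\lfloor n/2 \rfloor}$ for every subset $F \subseteq [n]$, this immediately gives $|\F| \le \binom{n}{\lfloor n/2 \rfloor}$, matching the construction.

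There is no real obstacle: the only conceptual step is the observation that the antichain hypothesis translates into the bound ``at most one member of $\F$ per maximal chain,'' after which everything is bookkeeping. This argument also previews the general strategy used later in the paper, where one double counts pairs $(\Q, F)$ for more elaborate chain-like structures $\Q$ associated to permutations of $[n]$.
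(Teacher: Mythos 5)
Your proof is correct: the middle layer gives the lower bound, and the chain/permutation double count gives the LYM inequality and hence the matching upper bound. The paper itself only cites Sperner's theorem without proof, but your argument is exactly the classical Lubell-style counting, which is also the $d=1$ special case of the paper's own double count of pairs $(\Q,F)$ with $F$ a prefix union of $\Q$, so it is fully in the spirit of the method used later.
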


Erd\H{o}s extended Sperner's theorem to $P_k$-free families for all $k \ge 2$. 

\begin{thm} [Erd\H{o}s \rm \cite{Erd}]
\label{Erd}
$La(n, P_{k})$ is equal to the sum of the $k-1$ largest binomial coefficients of order $n$.
This implies
\begin{equation*}
La(n, P_k) \leq (k-1){\binom {n} {\lfloor n/2 \rfloor}}.
\end{equation*}
\end{thm}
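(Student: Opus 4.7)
The plan is to prove the theorem by combining a direct construction with a Katona-style chain double counting argument that yields an LYM-type inequality, and then to deduce the exact bound by a simple optimization.

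\textbf{Lower bound (construction).} First I would exhibit a $P_k$-free family whose size equals the sum of the $k-1$ largest binomial coefficients. Take $\F$ to be the union of the $k-1$ middle levels of the Boolean lattice, i.e.\ the $k-1$ levels $\binom{[n]}{i}$ with $i$ as close to $n/2$ as possible. Any chain in $2^{[n]}$ contains at most one set from each level, so $\F$ contains no chain of length $k$ and therefore no weak copy of $P_k$. Its size is exactly the sum of the $k-1$ largest $\binom{n}{i}$.

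\textbf{Upper bound via Katona's circle/chain method.} Let $\F$ be any $P_k$-free family and consider the set of all $n!$ maximal chains $\emptyset = A_0 \subset A_1 \subset \cdots \subset A_n = [n]$ in $2^{[n]}$. I would double count pairs $(\mathcal{C},F)$ with $F \in \mathcal{C} \cap \F$. The key observation is that if some chain $\mathcal{C}$ contained $k$ members of $\F$, those $k$ sets, linearly ordered by inclusion, would form a weak copy of $P_k$ in $\F$; hence each maximal chain contains at most $k-1$ members of $\F$. On the other hand, the number of maximal chains through a fixed $F$ is $|F|!(n-|F|)!$. Combining these and dividing by $n!$ yields the LYM-type inequality
\[
\sum_{F \in \F} \frac{1}{\binom{n}{|F|}} \le k-1.
\]
The weaker bound $La(n,P_k) \le (k-1)\binom{n}{\lfloor n/2\rfloor}$ follows immediately by replacing each $\binom{n}{|F|}$ with the maximum $\binom{n}{\lfloor n/2\rfloor}$.

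\textbf{Sharpening to the exact bound.} Writing $x_i := |\F \cap \binom{[n]}{i}|$, the inequality becomes $\sum_i x_i/\binom{n}{i} \le k-1$ subject to $0 \le x_i \le \binom{n}{i}$, and we wish to maximize $|\F| = \sum_i x_i$. Since each variable $x_i$ contributes to the objective with unit ``cost'' $1/\binom{n}{i}$, a greedy/LP argument shows the maximum is achieved by saturating $x_i = \binom{n}{i}$ on the $k-1$ indices with the largest $\binom{n}{i}$, which exhausts the budget of $k-1$. This matches the construction.

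\textbf{Expected main obstacle.} The two nontrivial points are the chain-counting step and the optimization. The counting part is a clean application of the circle method but one must be careful that the argument really uses only weak inclusions (so that no antichain-type structure is assumed on $\F$). The optimization step is where the exact form of the bound (sum of $k-1$ largest binomial coefficients, rather than merely $(k-1)\binom{n}{\lfloor n/2\rfloor}$) comes from; here one needs a short exchange/rearrangement argument to confirm that the LP optimum is attained by full levels rather than fractional mixtures.
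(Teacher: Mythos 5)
Your proposal is correct, but note that the paper does not prove this statement at all: Theorem \ref{Erd} is quoted as a classical result of Erd\H{o}s \cite{Erd} and used as a black box (it is only needed to deduce Corollary \ref{eq:genP} for weak containment), so there is no internal proof to compare against. What you give is a complete and standard alternative to Erd\H{o}s's original argument: the $k-1$ middle levels supply the lower bound (using unimodality of $\binom{n}{i}$ to identify the $k-1$ largest coefficients with $k-1$ consecutive middle levels), and the chain double count is sound, since a maximal chain meeting $\F$ in $k$ sets would yield a (weak, indeed induced) $P_k$, giving the Lubell-type bound $\sum_{F\in\F} 1/\binom{n}{|F|}\le k-1$; the weaker bound $(k-1)\binom{n}{\lfloor n/2\rfloor}$ is then immediate. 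Your LP sharpening also goes through: writing $x_i=|\F\cap\binom{[n]}{i}|$, each fully occupied level costs exactly one unit of the budget $k-1$, and a one-line exchange argument (shifting mass from a smaller level to an unsaturated larger one never decreases $\sum_i x_i$) shows the fractional optimum is attained by saturating the $k-1$ largest levels, matching the construction. It is worth observing that this LYM-style route is philosophically the same "circle method" double counting that the present paper generalizes in Section \ref{sec:matrix2poset} (pairs $(\Q,F)$ with $F$ a prefix union), whereas Erd\H{o}s's original proof proceeded differently; your version has the added benefit of yielding the Lubell-function inequality discussed around Conjecture \ref{conj2}.
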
 

Notice that, since any poset $P$ is a weak subposet of a chain of length $|P|$, Theorem \ref{Erd} implies

\begin{cor}\label{eq:genP}
\begin{equation*}
 La(n,P) \le (|P|-1){\binom {n} {\lfloor n/2 \rfloor}}=O \left( \binom {n} {\lfloor n/2 \rfloor} \right ).
\end{equation*}
\end{cor}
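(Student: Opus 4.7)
The plan is to deduce Corollary \ref{eq:genP} as an immediate consequence of Theorem \ref{Erd}, exactly along the lines suggested by the parenthetical remark preceding the statement. The key observation is that every finite poset $P$ embeds, in an order-preserving way, into the chain $P_{|P|}$, so forbidding $P$ as a weak subposet is at least as restrictive as forbidding the longer chain $P_{|P|}$.

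First I would make the embedding $P \hookrightarrow P_{|P|}$ explicit. Fix any linear extension of the partial order on $P$, that is, a total order $<^{*}$ on the ground set of $P$ that refines $<_P$ (such an extension exists for every finite poset). Enumerating the elements of $P$ as $y_1 <^{*} y_2 <^{*} \cdots <^{*} y_{|P|}$ and writing $P_{|P|} = \{a_1 < a_2 < \cdots < a_{|P|}\}$, define $\alpha \colon P \to P_{|P|}$ by $\alpha(y_i) = a_i$. Then $\alpha$ is an injection and $x_1 <_P x_2$ implies $x_1 <^{*} x_2$, hence $\alpha(x_1) < \alpha(x_2)$ in $P_{|P|}$, so $P$ is a weak subposet of $P_{|P|}$.

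Next I would transfer this to families of sets. Suppose $\F \subseteq 2^{[n]}$ is $P$-free, and suppose for contradiction that some injection $\beta \colon P_{|P|} \to \F$ witnesses $P_{|P|}$ as a weak subposet of $\F$, i.e.\ $a_i < a_j \Rightarrow \beta(a_i) \subset \beta(a_j)$. Then $\beta \circ \alpha \colon P \to \F$ is an injection satisfying $x_1 <_P x_2 \Rightarrow \beta(\alpha(x_1)) \subset \beta(\alpha(x_2))$, which would make $P$ a weak subposet of $\F$, a contradiction. Therefore $\F$ is $P_{|P|}$-free, and Theorem \ref{Erd} gives $|\F| \le La(n,P_{|P|}) \le (|P|-1)\binom{n}{\lfloor n/2 \rfloor}$, as required.

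There is no real obstacle here: the statement is a one-step consequence of Erdős's theorem once one notes the existence of a linear extension of $P$. The only point requiring a modicum of attention is bookkeeping between the abstract order relation on $P_{|P|}$ (used when embedding $P$) and subset-containment in $\F$ (used in the definition of weak subposet), but these compose transitively through $\beta \circ \alpha$, so the reduction is immediate.
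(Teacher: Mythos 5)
Your proposal is correct and is exactly the paper's argument: the paper deduces the corollary in one line from the fact that $P$ is a weak subposet of the chain $P_{|P|}$ (via a linear extension) together with Theorem \ref{Erd}, and you simply spell out the same reduction in detail.
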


Later Katona, Tarj\'{a}n, De Bonis, Swanepoel, Griggs, Lu, Li, Thanh, Methuku and Tompkins \cite{Krfork,DKS,GriggsK,GLu,KatTar,MCasey,Thanh} studied various other posets including brushes, two-end-forks, $N$, diamond, butterfly, skew butterfly, cycles $C_{4k}$ on two levels.
One of the first general results is due to Bukh who obtained bounds on $La(n, P)$ for all posets whose Hasse diagram is a tree.
Let $h(P)$ denote the height (maximum length of a chain) of $P$. 

\begin{thm} [Bukh \cite{Bukh}] 
\label{Bukh}
If $T$ is a finite poset whose Hasse diagram is a tree of height $h(T) \ge 2$, then
\begin{equation}
La(n,T)= (h(T)-1){\binom {n} {\lfloor n/2 \rfloor}} \left(1+O(1/n)\right).
\end{equation}
\end{thm}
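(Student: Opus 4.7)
The plan splits into a simple lower bound and a delicate upper bound. For the \textbf{lower bound}, take $\mathcal{F}$ to be the union of the $h(T)-1$ levels of $B_n$ closest to $\lfloor n/2\rfloor$; such a family contains no chain of length $h(T)$, hence no copy of $T$, and $|\mathcal{F}|=(h(T)-1)\binom{n}{\lfloor n/2\rfloor}(1-O(1/n))$ by unimodality of the binomial coefficients. For the \textbf{upper bound}, the plan is a Katona-style chain/circle method: let $\mathcal{F}$ be $T$-free and let $C=(A_0\subset A_1\subset\cdots\subset A_n)$ be a uniformly random maximal chain of $B_n$. Double counting gives $E[|\mathcal{F}\cap C|]=\sum_{F\in\mathcal{F}}1/\binom{n}{|F|}\ge |\mathcal{F}|/\binom{n}{\lfloor n/2\rfloor}$, so it suffices to show $E[|\mathcal{F}\cap C|]\le h(T)-1+O(1/n)$.

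The core is the following combinatorial claim, to be proved by induction on $|T|$: apart from an $O(1/n)$-fraction of ``boundary'' maximal chains, every maximal chain contains at most $h(T)-1$ members of $\mathcal{F}$. To run the inductive step, pick a leaf $v\in T$ with unique neighbor $u$ and set $T'=T-v$; suppose for contradiction some non-boundary chain $C$ contains $h(T)$ sets of $\mathcal{F}$. By the induction hypothesis applied to $T'$ we embed $T'$ using sets on or near $C$; the remaining \emph{extra} chain-set beyond what $T'$ requires provides exactly the slack needed for $v$. Since $v$ is a leaf, its only constraint is its relation to the image $\alpha(u)$, and a suitable unused chain-set on the correct side of $\alpha(u)$ can serve as $\alpha(v)$; its incomparability with the other $\alpha$-images is automatic because they all lie on the chain $C$.

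The \textbf{main obstacle} is precisely this leaf-extension step, and the tree hypothesis is used there essentially: a Hasse-diagram leaf has a unique neighbor in $T$, so one additional chain-set suffices to extend the embedding. For non-tree posets (e.g., the diamond) an uncovered vertex may have several covering relations imposing simultaneous upper- and lower-interval constraints, and no such one-step extension need exist; indeed the extremal constants for such posets are conjectured to be strictly larger than $h(T)-1$. Obtaining the sharp leading constant $h(T)-1$, rather than some weaker function of $T$, together with the $(1+O(1/n))$ error, further requires running the induction along \emph{maximal} chains---so that ``sets on a chain'' corresponds directly to the extremal count---and carefully discarding chains close to $\emptyset$ or $[n]$, where the weights $1/\binom{n}{|A_i|}$ are non-uniform and would otherwise distort the expectation.
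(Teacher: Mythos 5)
Your lower bound and the reduction of the upper bound to bounding $E[\,|\mathcal{F}\cap C|\,]$ are fine and standard (note the paper only quotes this theorem from Bukh and contains no proof of it, so there is nothing to compare against there). The gap is in your core claim and in the induction offered for it. The pointwise claim that for every $T$-free family all but an $O(1/n)$-fraction of maximal chains contain at most $h(T)-1$ members of $\mathcal{F}$ is false as stated: take $T=V_2$ (one element below two incomparable ones, height $2$) and $\mathcal{F}$ equal to the middle level together with the single set $[n]$. Each middle set has only $[n]$ as a proper superset in $\mathcal{F}$, so $\mathcal{F}$ is (weakly) $V_2$-free, yet \emph{every} maximal chain contains exactly two members of $\mathcal{F}$, not $h(T)-1=1$. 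Note also that every maximal chain passes through $\emptyset$ and $[n]$, so ``discarding boundary chains'' is not a meaningful operation; what the standard argument discards are family members of non-middle \emph{size}, and after that truncation one only needs -- and in general can only hope to prove -- a bound on the \emph{average} of the truncated chain count, not a bound valid on almost every chain. Bounding that average by $h(T)-1+O(1/n)$ is exactly the hard content of Bukh's theorem; it is not a lemma that falls out of the double counting.

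The leaf-extension induction would not establish the claim even where it is not outright false. Your induction hypothesis is a counting statement about $T'$-free families, but $\mathcal{F}$ is not assumed to be $T'$-free, so the hypothesis cannot be ``applied to $T'$'' to produce an embedding of $T'$ on or near $C$. Moreover, a single chain carrying $h(T)$ members of $\mathcal{F}$ yields no contradiction at all unless $T$ is itself a chain: $T'$ has $|T|-1$ elements, typically far more than $h(T)$, and the sets available on one chain are pairwise \emph{comparable} (so the remark that incomparability of the images is ``automatic'' is backwards -- though for weak containment incomparabilities need not be respected anyway, which is why the only obstruction is having enough room to branch). Deleting a Hasse-diagram leaf also need not decrease the height, so the ``one extra chain-set of slack'' bookkeeping does not line up. The missing idea, which is the substance of Bukh's proof, is an embedding argument that operates across many chains: one shows that if the suitably truncated Lubell-type average exceeds $h(T)-1+C/n$, then the whole branching tree can be embedded greedily, each vertex being placed in the middle range so that sufficiently many supersets and subsets in $\mathcal{F}$ remain available for the subtrees hanging off it. None of that is supplied by the one-chain-plus-one-leaf picture in your sketch.
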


Using general structures instead of chains for double counting,
Burcsi and Nagy obtained a similar but weaker version of this theorem for general posets.
Later this was generalized by Chen and Li for $m>1$.

\begin{thm} [Burcsi-Nagy, Chen-Li \cite{BurcsiNagy,ChenLi}]
\label{generalbound}
For any poset $P$, when $n$ is sufficiently large, the inequality
\begin{equation}
\label{eq:genChenLi}
La(n,P) \le \frac{1}{m+1} \left(\abs{P} + \frac{1}{2}(m^2 +3m-2)(h(P)-1) -1 \right) {\binom {n} {\lfloor n/2 \rfloor}}
\end{equation}
holds for any fixed $m \ge 1$.
\end{thm}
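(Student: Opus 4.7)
The plan is to generalize Katona's chain (cycle) counting method by double counting incidences between sets in $\F$ and certain auxiliary structures richer than maximal chains in the Boolean lattice $2^{[n]}$.

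First, for the fixed parameter $m\ge 1$, I would define a collection $\mathcal{S}$ of \emph{$m$-augmented chains} in $2^{[n]}$. Each $S\in\mathcal{S}$ consists of a maximal chain $C_\pi=(\emptyset=A_0\subset A_1\subset\dots\subset A_n=[n])$ coming from a permutation $\pi$ of $[n]$, together with $m$ additional ``sibling'' subsets $B_1,\dots,B_m$, each sitting at some chain level $i$ but differing from the corresponding $A_i$ by a single element swap with $[n]\setminus A_i$ compatible with the neighbouring chain inclusions. The combinatorial normalisation of $\mathcal{S}$ should be chosen so that every subset $F\subseteq[n]$ is contained in essentially $(m+1)\cdot|F|!(n-|F|)!$ members of $\mathcal{S}$, up to boundary corrections that are negligible when $n$ is sufficiently large; this is where the ``$n$ sufficiently large'' hypothesis enters.

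The heart of the argument is the combinatorial inequality
\[
\abs{\F\cap S}\;\le\;K\;:=\;\abs{P}+\tfrac12(m^2+3m-2)(h(P)-1)-1 \qquad \text{for every } S\in\mathcal{S},
\]
whenever $\F$ is $P$-free. The intuition is that the chain skeleton of $S$ can support a subchain of height at most $h(P)-1$ before some linear extension of $P$ would be forced as a weak subposet, while each sibling level provides a width-two antichain that temporarily postpones such an embedding; by analysing how these antichains interact with the backbone, each chain level can absorb up to $\tfrac12(m^2+3m-2)$ additional sets of $\F$ before a copy of $P$ becomes unavoidable. Pinning down the precise quadratic coefficient $\tfrac12(m^2+3m-2)$ is where I expect the real work to lie: one must charge every surplus set in $\F\cap S$ against a concrete witness embedding $P\to\F$ and verify the charging is injective, which calls for a careful extremal case analysis within the augmented chain.

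Granted this bound, the proof finishes with a standard double count:
\[
(m+1)\sum_{F\in\F}|F|!(n-|F|)! \;\le\; \sum_{S\in\mathcal{S}}\abs{\F\cap S}\;\le\;K\cdot\abs{\mathcal{S}},
\]
which, combined with $|F|!(n-|F|)!\ge n!/\binom{n}{\lfloor n/2\rfloor}$ on the left and $\abs{\mathcal{S}}=n!$ (up to the same boundary corrections) on the right, yields
\[
\abs{\F}\;\le\;\frac{K}{m+1}\binom{n}{\lfloor n/2\rfloor},
\]
which is exactly the claimed bound.
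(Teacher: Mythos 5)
First, note that the paper you are working from does not prove this theorem at all: it is quoted as background from Burcsi--Nagy (the case $m=1$, via ``double chains'') and Chen--Li (general $m$), so the relevant comparison is with those papers. Your skeleton --- double counting pairs $(S,F)$ with $S$ a chain enriched by extra sets, a per-structure bound $\abs{\F\cap S}\le K$, and the Lubell-type normalization $\abs{F}!\,(n-\abs{F})!\ge n!/\binom{n}{\lfloor n/2\rfloor}$ --- is indeed their route. But as written there are two genuine gaps. The first concerns the auxiliary structure itself: if $S$ is a maximal chain together with only $m$ sibling sets in total, then a fixed $F$ of middle size lies in roughly $\abs{F}!\,(n-\abs{F})!\,(1+O(m/n))$ such structures relative to the total, not $(m+1)\abs{F}!\,(n-\abs{F})!$; appearing as one of $m$ siblings scattered over $n$ levels is an event of relative weight $O(m/n)$, not $m$. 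To gain the factor $m+1$ uniformly the structure must carry on the order of $m$ extra sets at \emph{every} level (about $mn$ extra sets in all), which is what Chen--Li actually do, adjoining at level $i$ sets built from $A_{i-1}$ and the next few elements of the permutation. Since you never fix the structure precisely, neither your normalization claim nor the constant $K$ can be checked.

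The second and more serious gap is that the entire content of the theorem sits in the inequality $\abs{\F\cap S}\le \abs{P}+\tfrac12(m^2+3m-2)(h(P)-1)-1$, equivalently in the embedding lemma that any subfamily of the (generalized) double chain exceeding this size contains $P$ as a weak subposet (for $m=1$ this is the Burcsi--Nagy bound $\abs{P}+h(P)-2$). You call this the heart of the argument and then explicitly defer it, offering only the heuristic that each level can ``absorb'' $\tfrac12(m^2+3m-2)$ extra sets. That is not a charging argument: you do not exhibit an embedding of $P$, you do not explain where $h(P)$ enters (it comes from embedding the chains of a linear extension or chain decomposition of $P$ into few consecutive levels of the structure), and you do not verify the quadratic coefficient, which is precisely the nontrivial bookkeeping in Chen--Li. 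What you have is a correct reduction of the theorem to its key lemma together with a plausible but unproven statement of that lemma; as a proof of the stated bound it is incomplete.
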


Very recently, this general bound was improved by Gr\'osz, Tompkins and the first author.

\begin{thm}[Gr\'osz-Methuku-Tompkins \cite{GMT}]
For any poset $P$, when $n$ is sufficiently large, the inequality
\begin{equation}
La(n,P) \le  \frac{1}{2^{k-1}} \left(\abs P + (3k-5)2^{k-2}(h(P)-1) - 1 \right) {n \choose \left\lfloor n/2\right\rfloor }
\end{equation}
holds for any fixed $k\geq 2$.
\end{thm}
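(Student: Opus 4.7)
The plan is to refine the Burcsi--Nagy/Chen--Li variant of Katona's circle method (Theorem \ref{generalbound}) by replacing the $m+1$ chains in their double-counting structure with $2^{k-1}$ chains arranged as the leaves of a binary tree of depth $k-1$. Given a random permutation $\pi$ of $[n]$ together with $k-1$ auxiliary random choices, I would define a random subfamily $\mathcal{Q} = \mathcal{Q}(\pi) \subseteq 2^{[n]}$ that is the union of $2^{k-1}$ saturated chains sharing long common segments, with $|\mathcal{Q}|$ of order $2^{k-1}(n+1)$. A natural construction starts from the Katona chain $A_i = \{\pi(1),\ldots,\pi(i)\}$ and at $k-1$ prescribed levels splits each existing chain into two by swapping the relative order in which a chosen pair of consecutive $\pi$-blocks is added. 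By symmetry, any fixed $F \subseteq [n]$ of size $\ell$ then lies in $\mathcal{Q}$ with probability of order $2^{k-1}/\binom{n}{\ell}$.

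The heart of the argument would be a chain lemma: for every $P$-free family $\F$,
\[
|\F \cap \mathcal{Q}| \;\le\; |P| + (3k-5)\,2^{k-2}(h(P)-1) - 1,
\]
proved by induction on $k$. The base case $k=2$ essentially recovers Chen--Li with $m=1$: each of the two chains can contain at most $|P|-1$ sets of $\F$ (since the chain of length $|P|$ contains every $|P|$-element poset as a weak subposet, cf.\ Corollary \ref{eq:genP}), and a long matched pair of comparable sets across the two chains would produce a weak copy of $P$ using $h(P)$ comparable sets together with two incomparable ones, capping the branching contribution at roughly $h(P)-1$. The inductive step cuts $\mathcal{Q}$ at its topmost branching into two depth-$(k-2)$ sub-structures, each contributing the inductive bound, with the top branching adding at most $O(h(P)-1)$ cross-chain sets; tallying over the $k-1$ levels yields the coefficient $(3k-5)\,2^{k-2}$.

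Double counting pairs $(\pi,F)$ with $F \in \F \cap \mathcal{Q}(\pi)$ across all permutations and branching choices, together with $\binom{n}{\ell} \le \binom{n}{\lfloor n/2 \rfloor}$, then yields
\[
|\F| \;\le\; \frac{1}{2^{k-1}}\bigl(|P| + (3k-5)\,2^{k-2}(h(P)-1) - 1\bigr)\binom{n}{\lfloor n/2 \rfloor},
\]
as required. The main obstacle is the chain lemma: the branching pattern must be designed so that the pairwise overlaps of the $2^{k-1}$ chains genuinely restrict $\F \cap \mathcal{Q}$, forcing only an additive $O(h(P)-1)$ cost per branching level instead of the multiplicative cost that a naive estimate would give. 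Obtaining exactly the coefficient $3k-5$ rather than a larger polynomial or exponential in $k$ will require a delicate accounting of how many sets of $\F$ can cluster around each branching point without already producing $P$ as a weak subposet, and is the step where the use of $h(P)$ (rather than $|P|$) in the recursion has to be leveraged carefully.
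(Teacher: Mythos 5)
First, note that the paper you are working from does not actually prove this statement: it is quoted as a known result and attributed to \cite{GMT}, so there is no in-paper proof to match against. Your proposal does capture the general spirit of that cited work (a Katona-type double count against a richer chain structure of multiplicity $2^{k-1}$, generalizing the Burcsi--Nagy double chain and the Chen--Li $m$-chain, with a key lemma bounding how many members of a $P$-free family the structure can contain). But as a proof it has a genuine gap, and the gap is precisely the theorem's content. The ``chain lemma'' $|\F\cap \Q|\le |P|+(3k-5)2^{k-2}(h(P)-1)-1$ is not proved; it is restated as the goal and deferred to ``delicate accounting.'' Even your base case $k=2$ is not established: saying each of the two chains contains at most $|P|-1$ members of $\F$ gives $2(|P|-1)$, not $|P|+h(P)-2$, and the sentence about ``a long matched pair of comparable sets'' producing a weak copy of $P$ is exactly the nontrivial embedding argument (Burcsi--Nagy's lemma, which needs a careful greedy embedding of $P$ by height/antichain decomposition), not a proof of it. The inductive step ``each branching adds at most $O(h(P)-1)$'' cannot, as stated, yield the specific coefficient $(3k-5)2^{k-2}$; an unquantified additive cost per branching compounded over a binary tree of depth $k-1$ could just as easily give a different polynomial or an exponential overhead.

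Second, the counting side is also too loose for the claimed inequality. The theorem asserts an exact constant, so ``$F$ lies in $\Q$ with probability of order $2^{k-1}/\binom{n}{\ell}$'' does not suffice: the double count needs, for every fixed $F$ with $|F|=\ell$, an exact (or exactly lower-bounded) count of the number of structures containing $F$, and this depends entirely on the precise definition of the block-swapping construction, which you leave unspecified. In particular, the $2^{k-1}$ chains obtained by swapping consecutive blocks share many sets, so a given $F$ may lie in several of them for the same permutation, and multiplicities must be handled so that the Lubell-type sum comes out with the factor $2^{k-1}$ and no loss at the split levels. Until the structure is pinned down, both the per-set count and the intersection lemma are unverifiable, so the proposal is a reasonable plan of attack rather than a proof.
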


\subsection{Forbidden induced subposets}
\begin{defi}
We say that $P$ is contained in $\F$ as an {\em induced} subposet if and only if there is an injection $\alpha : P \rightarrow \mathcal{F}$ satisfying $x_1 <_p x_2 \Leftrightarrow \alpha(x_1)\subset \alpha(x_2)$ for all $x_1,x_2\in P$.
$\F$ is called induced $P$-free if $P$ is not contained in $\F$ as an induced subposet. 
We define the corresponding extremal function as
$$ La^{\#}(n,P) := \max \{ \abs{\F} ~ \mid~ \mathcal{F} ~\text{is induced $P$-free}\}.$$ 
\end{defi}

Despite considerable progress made on forbidden weak subposets, very little is known about forbidden induced subposets.
The obvious exception is $P_k$, as here the weak and induced containment are equivalent, thus Theorem \ref{Erd} implies $La(n, P_{k}) = La^{\#}(n,P_k)$.

The first result of this type is due to Carroll and Katona \cite{CarK} who showed that ${\binom {n} {\lfloor n/2 \rfloor}} (1 + \frac{1}{n} + \Omega(\frac{1}{n^2})) \le La^{\#}(n,V_2) \le {\binom {n} {\lfloor n/2 \rfloor}} (1 + \frac{2}{n} + O(\frac{1}{n^2}))$.
Recently Boehnlein and Jiang extended Theorem \ref{Bukh} to induced containment.

\begin{thm} [Boehnlein-Jiang \cite{EdTao}]
If $T$ is a finite poset whose Hasse diagram is a tree of height $h(T) \ge 2$, then
\begin{equation}
La^{\#}(n,T)= (h(T)-1){\binom {n} {\lfloor n/2 \rfloor}} \left(1+o(1)\right).
\end{equation}
\end{thm}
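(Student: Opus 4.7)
The plan is to follow the random chain method that Bukh used for weak containment (Theorem~\ref{Bukh}), adding a structural step that upgrades a long chain in $\F$ into a genuinely \emph{induced} copy of $T$. The lower bound $La^{\#}(n,T)\ge (h(T)-1)\binom{n}{\lfloor n/2\rfloor}(1-o(1))$ is immediate from the antichain-of-layers construction: the union of $h(T)-1$ consecutive middle layers of $2^{[n]}$ has this size and contains no chain longer than $h(T)-1$, so $T$ cannot even embed as a weak subposet, let alone an induced one.

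For the upper bound, I would sample a uniformly random maximal chain $C_\pi$ in $2^{[n]}$ (via a uniform permutation $\pi$ of $[n]$) and use the LYM-type identity $\abs{\F}/\binom{n}{\lfloor n/2\rfloor}\le \mathbb{E}[\abs{\F\cap C_\pi}](1+o(1))$. The task reduces to establishing $\mathbb{E}[\abs{\F\cap C_\pi}]\le h(T)-1+o(1)$ whenever $\F$ is induced $T$-free; equivalently, a random chain that meets $\F$ in $\ge h(T)$ sets with non-negligible probability should force an induced copy of $T$ to appear inside $\F$.

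The main obstacle lies precisely in this last implication. Bukh's argument for weak containment embeds $T$ greedily along its Hasse diagram by successively taking supersets or subsets, which always exist because $\F$ is dense near the random chain. For \emph{induced} containment, one must additionally prevent the images of incomparable elements of $T$ from becoming comparable in $2^{[n]}$. My plan is to process $T$ along its Hasse tree: whenever a branch has to be placed off the trunk chain, pick a ``sibling'' set in $\F$ whose symmetric difference with the trunk is concentrated in coordinates revealed late by $\pi$, so that the sibling is incomparable to the sets farther down the chain while still containing (or being contained in) the intended parent. The delicate point is controlling the joint failure probability when all branches of $T$ must be placed simultaneously; I expect this to force restricting attention to sets $F$ with $\abs{\abs{F}-n/2}\le \sqrt n\log n$, so that enough conditional randomness remains in $\pi$ for each sibling choice to succeed with probability $1-o(1)$ and the $1+o(1)$ factor survives the final union bound.
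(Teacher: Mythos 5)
First, note that the paper does not prove this statement at all: it is quoted from Boehnlein--Jiang \cite{EdTao}, so your proposal has to stand on its own merits against that (substantially more involved) argument. Your lower bound is fine. The upper bound, however, has a genuine gap at its very first reduction. The inequality you claim the task ``reduces to,'' namely $\mathbb{E}[\abs{\F\cap C_\pi}]\le h(T)-1+o(1)$ for every induced $T$-free $\F$ --- i.e.\ a Lubell-function bound --- is false: a single maximal chain $\emptyset\subset\{1\}\subset\cdots\subset[n]$ contains no two incomparable sets, hence is induced $T$-free for every tree poset $T$ of height $2$ that is not a chain (e.g.\ $T=V_2$), yet its Lubell function $\sum_{k=0}^n 1/\binom{n}{k}\to 2>1=h(T)-1$. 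This is exactly where the induced problem departs from Bukh's weak-containment setting (a chain contains every small poset as a weak subposet, so this obstruction is invisible there), and it is why even the boundedness of the Lubell function for induced $P$-free families is only a conjecture of Lu and Milans (Conjecture \ref{conj2} of this paper); the present paper itself only controls the shifted sum $\sum_{F\in\F} 1/\binom{n+2d-2}{\abs{F}+d-1}$. Your later truncation to $\abs{\abs{F}-n/2}\le\sqrt n\log n$ does kill this particular counterexample, but then the statement ``the truncated Lubell function of an induced $T$-free family is at most $h(T)-1+o(1)$'' is not a known lemma you may invoke: it essentially \emph{is} the theorem, and proving it is where all the work lies.

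Second, the step that is supposed to supply that work --- upgrading $h(T)$ hits of the random chain into an induced copy of $T$ by choosing ``sibling'' sets whose symmetric difference with the trunk is concentrated in coordinates revealed late by $\pi$ --- is a hope rather than an argument. Induced containment requires incomparability for \emph{every} incomparable pair of $T$: between distinct off-trunk siblings, and between a sibling and trunk elements both above and below its parent, not merely ``incomparable to the sets farther down the chain''; moreover each sibling must itself be a member of $\F$, over whose structure you have no control, so there is no identified supply of candidate sets to which your probabilistic estimate could be applied. You flag the joint failure probability as the delicate point, but no event definitions, no mechanism for producing candidates inside $\F$, and no union-bound computation are given --- and this is precisely the content of the Boehnlein--Jiang proof. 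As it stands, the proposal reduces the theorem to an unproved (and, without truncation, false) lemma plus an unexecuted embedding scheme, so it does not constitute a proof.
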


Boehnlein and Jiang have also shown that even though $La(n,P)$ and $La^{\#}(n,P)$ are asymptotically equal for posets whose Hasse diagram is a tree, there are posets $P$ for which their ratio,
$\frac{La^{\#}(n,P)}{La(n,P)}$ can be arbitrarily large. Very recently, Patk\'os \cite{Patkos} determined the asymptotic behavior of $La^{\#}(n,P)$ for some classes of posets, namely, the complete $2$ level poset $K_{r,s}$ and the complete multi-level poset $K_{r,s_1,...,s_j,t}$ when all $s_i$'s either equal $4$ or are large enough and satisfy an extra condition.

Note that while $La(n,P)  \le La^{\#}(n,P)$, no nontrivial upper bound for general $P$, was known for $La^{\#}(n,P)$ when the Hasse diagram of $P$ contains a cycle.
It was conjectured by Katona and, independently, by Lu and Milans that the analogue of Corollary \ref{eq:genP} holds for induced posets as well. 

\begin{conj}[Katona, Lu-Milans \cite{Milans}]
\label{conj}
\begin{equation}
 \label{eq:geninducedP}
La^{\#}(n,P) = O \left( \binom {n} {\lfloor n/2 \rfloor} \right ).
\end{equation}
\end{conj}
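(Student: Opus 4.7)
The plan is to derive the conjecture as an immediate consequence of Lemma~\ref{matrix2poset} combined with Theorem~\ref{hypermarcustardos}, as advertised in the introduction. All of the substantive work lies in Lemma~\ref{matrix2poset}, so I will concentrate on an approach to that; once it is in place, Theorem~\ref{hypermarcustardos} supplies a constant $K=K(P)$ such that every $M_P$-free $d$-dimensional $n\Times n$ hypermatrix has at most $Kn^{d-1}$ nonzero entries, and Lemma~\ref{matrix2poset} then yields $La^{\#}(n,P)\le 2^d K\binom{n}{\lfloor n/2\rfloor}$ for every fixed poset $P$.

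To prove Lemma~\ref{matrix2poset}, I would first let $d=\dim(P)$ be the order dimension of $P$ on $k=|P|$ elements, and fix $d$ linear extensions $L_1,\ldots,L_d$ of $P$ whose common refinement equals $<_P$. These determine an injection $P\hookrightarrow [k]^d$ via $p\mapsto (L_1(p),\ldots,L_d(p))$, and I let $M_P$ be the $d$-dimensional $k\Times k$ hypermatrix with a $1$ at each image point. Since each $L_i$ is a bijection $P\to [k]$, every axis-parallel line of $M_P$ carries exactly one $1$, so $M_P$ is a permutation hypermatrix, as required.

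For the double-counting step, generalizing Katona's circle method, I would take $\Q$ to be built as follows: pick a permutation $\pi$ of $[n]$ together with an ordered partition of $\pi$ into $d$ consecutive blocks $A_1,\ldots,A_d$, and form the grid of sets $\{S(i_1,\ldots,i_d):0\le i_l\le|A_l|\}$, where $S(i_1,\ldots,i_d)$ is the union, over $l$, of the first $i_l$ elements of $A_l$ under $\pi$. These grid sets form a product chain poset, and inclusion between two of them is exactly componentwise ordering of their indices. To $\Q$ I associate the hypermatrix $M_\Q$ with a $1$ at $(i_1,\ldots,i_d)$ iff $S(i_1,\ldots,i_d)\in\F$. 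The structural key claim is that if $\F$ is induced $P$-free, then $M_\Q$ contains no copy of $M_P$: indeed, an occurrence of $M_P$ inside $M_\Q$ would yield an injection $p\mapsto S_p$ from $P$ into $\F$, and since $M_P$ is a \emph{permutation} hypermatrix the coordinates of any two images $p\ne q$ differ strictly on every axis, so $S_p$ and $S_q$ are comparable by inclusion precisely when the comparison is componentwise, and the intersection characterization of the linear extensions $L_i$ then forces $S_p\subsetneq S_q\iff p<_P q$ in both directions, giving an induced copy of $P$ in $\F$.

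To conclude, I would double count pairs $(\Q,F)$ with $F\in\F\cap\Q$. By the key claim and Theorem~\ref{hypermarcustardos}, each $\Q$ contributes at most $Kn^{d-1}$ such pairs. On the other hand, a fixed $F\in\F$ of size $s$ belongs to $\Q$ iff $F\cap A_l$ is an initial $\pi$-segment of $A_l$ for each $l$, and the number of $\Q$ containing $F$ works out to $s!(n-s)!$ times a combinatorial factor in $d$; comparing the two counts and using $\binom{n}{s}^{-1}\le \binom{n}{\lfloor n/2\rfloor}^{-1}$ yields the claimed $|\F|\le 2^dK\binom{n}{\lfloor n/2\rfloor}$. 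The hardest step, in my view, is the structural key claim above: one must guarantee that an $M_P$-pattern inside $M_\Q$ really produces an \emph{induced} (not merely weak) copy of $P$, which genuinely uses both directions of the order-dimension characterization together with the fact that, as a permutation hypermatrix, no two $1$-positions of $M_P$ share any coordinate.
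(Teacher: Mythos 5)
Your proposal is correct and follows essentially the same route as the paper: the same permutation hypermatrix $M_P$ built from $d$ linear extensions (Observation \ref{poset2matrix}), the same permutation $d$-partitions with prefix-union grids and the associated matrix $M_{\Q}$, the same key claim that an $M_P$-pattern in $M_{\Q}$ forces an induced copy of $P$ (Claim \ref{countA}), and the same double count of pairs $(\Q,F)$ (Claim \ref{countP}) combined with Theorem \ref{hypermarcustardos}. Only note that the final inequality should read $\binom{n}{s}^{-1}\ge\binom{n}{\lfloor n/2\rfloor}^{-1}$ (the number of $\Q$ containing $F$ is \emph{minimized} for middle-sized $F$), a direction slip that does not affect the argument.
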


The main result of our paper is to prove Conjecture \ref{conj} for all posets $P$.

\medskip
\noindent{\bf Theorem \ref{mainthm}.} {\em For every poset $P$, there is a $C$ such that
\begin{displaymath}
La^{\#}(n, P) \le C {\binom{n}{\lfloor\frac{n}{2}\rfloor}}.
\end{displaymath}
}
\medskip

It is interesting to note that the constant $C$ in our upper bound on $La^{\#}(n,P)$ depends on the {\em dimension} of $P$, as opposed to $h(P)$ in the upper bound on $La(n,P)$ in Theorem \ref{Bukh} and \ref{generalbound}. 
This notion is defined as follows.

\begin{defi}\label{posetdim}
The {\em dimension} of a poset $P$ is the least integer $t$ for which there exists $t$ linear orderings, $<_1, <_2 \ldots <_t$, of the elements of $P$ such that for every $x$ and $y$ in $P$, $x <_{P} y$ if and only if $x <_i y$ for all $1 \le i \le t$.
\end{defi}

\subsection{Forbidden submatrices} 
A $d$-dimensional {\em hypermatrix} is an $n_1\Times  n_d$ size ordered array.
For short, we refer to such a hypermatrix as a {\em $d$-matrix} of size $n^d$ if $n_1=\ldots=n_d=n$.
So a vector is a $1$-matrix and a matrix is a $2$-matrix. 
We refer to the entries of a $d$-matrix $M$ as $M(\ul i)$ where
$\ul i=(i_1,\ldots i_d)$ and $1\le i_j\le n_j$ for every $1\le j\le d$.
A {\em $j$-column} is a set of entries $\{M(\ul i)\mid 1\le i_j\le n_j\}$.
In this paper we only deal with $d$-matrices whose entries are all $0$ and $1$.
We denote the number of $1$'s in a $d$-matrix $M$ by $|M|$.

\begin{defi}
We say that a $d$-matrix $M$ {\em contains} a $d$-matrix $A$ if it has a $d$-submatrix $M'\subset M$
that is of the same size as $A$ such that $A(\ul i)=1 \Rightarrow M'(\ul i)=1$.
If $M$ does not contain $A$ then we say that $M$ is {\em $A$-free}.
We define the corresponding extremal function as
$$ex_d(n_1\Times n_d,A) := \max \{ \abs{M} ~ \mid~ M ~\text{is }A\text{-free, }d\text{-dimensional of size }n_1\Times  n_d\}$$
and if $n_1=\ldots=n_d$, we use $ex_d(n,A) := ex_d(n_1\Times n_d,A)$.
\end{defi}

So notice that $ex_1(n,A)=\min\{n,|A|-1\}$ and $ex_2(n,A)$ is the usually studied forbidden submatrix problem. 
These notions were previously mainly studied for matrices, see \cite{Gyori,KlazarV,Keszegh,PachTar,Seth,SethDS,SethDisproveFH,Tardos}. 

We have the following monotonicity for matrices with forbidden submatrices, similar to that of the monotonicity of density of graphs with forbidden subgraphs as shown in \cite{TuranDensity}.

\begin{claim}\label{exmon} If for $1\le i\le d$ we have $m_i\le n_i$, then
$$ex_d(n_1\Times  n_d,A)\le
\frac{n_1}{m_1}\cdots \frac{n_d}{m_d} ex_d(m_1\Times  m_d,A).$$
\end{claim}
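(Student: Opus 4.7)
The plan is a straightforward averaging argument, exactly paralleling the classical graph-density monotonicity cited in the claim. Fix an extremal $A$-free $d$-matrix $M$ of size $n_1\Times n_d$, so that $|M|=ex_d(n_1\Times n_d,A)$. For every tuple of index sets $(S_1,\ldots,S_d)$ with $S_j\subseteq[n_j]$ and $|S_j|=m_j$, let $M[S_1,\ldots,S_d]$ denote the sub-hypermatrix of $M$ obtained by keeping only the coordinates in each $S_j$. Since any copy of $A$ living inside $M[S_1,\ldots,S_d]$ is witnessed by a sub-hypermatrix of $M$ itself (the witness for containment inside the restriction is automatically a sub-hypermatrix of $M$), $A$-freeness is inherited, and so each such restriction contains at most $ex_d(m_1\Times m_d,A)$ ones.

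Next, I will double count pairs consisting of a $1$-entry of $M$ together with a restriction $M[S_1,\ldots,S_d]$ that contains it. A given $1$-entry at position $(i_1,\ldots,i_d)$ appears in exactly $\prod_{j=1}^{d}\binom{n_j-1}{m_j-1}$ restrictions, since one must choose the remaining $m_j-1$ elements of each $S_j$ from $[n_j]\setminus\{i_j\}$. Summing the per-restriction upper bound over all $\prod_{j=1}^{d}\binom{n_j}{m_j}$ restrictions yields
$$|M|\cdot \prod_{j=1}^{d}\binom{n_j-1}{m_j-1}\;\le\;\prod_{j=1}^{d}\binom{n_j}{m_j}\cdot ex_d(m_1\Times m_d,A),$$
and dividing using the identity $\binom{n_j}{m_j}/\binom{n_j-1}{m_j-1}=n_j/m_j$ gives the claim.

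I do not anticipate any real obstacle: the entire argument is averaging, and the only conceptual point is the inheritance of $A$-freeness under sub-hypermatrix restriction, which is immediate from the containment definition in the paper. The $d$-dimensional nature of the problem enters only through the product $\prod_j n_j/m_j$ coming from the product of binomial ratios, and plays no role beyond that.
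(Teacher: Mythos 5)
Your proof is correct and is essentially the paper's own argument: the paper also fixes an extremal $A$-free $M$, notes that $A$-freeness is inherited by every $m_1\Times m_d$ sub-hypermatrix, and averages, phrasing your binomial-coefficient double count probabilistically (each entry lies in a random restriction with probability $\frac{m_1}{n_1}\cdots\frac{m_d}{n_d}$). No gap; your version just makes the counting explicit.
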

\begin{proof}
Let $M$ be an $A$-free $d$-matrix of size $n_1\Times  n_d$.
Any $M'$ $d$-submatrix of $M$ is also $A$-free.
If $M'$ is of size $m_1\Times  m_d$, then $|M'|\le ex_d(m_1\Times  m_d,A)$.
Averaging over all submatrices of this size the statement follows, as any entry of $M$ has probability $\frac{m_1}{n_1}\cdots \frac{m_d}{n_d}$ to be in a submatrix.
\end{proof}

From this we can get a bound on $ex_d(n_1\Times  n_d,A)$ from $ex_d(n,A)$.

\begin{cor}\label{square2rectangle}
If for all $n$ we have $ex_d(n,A)\le Kn^{d-1}$, then for all $n_1,\ldots,n_d$ we have $ex_d(n_1\Times  n_d,A)\le K\frac{n_1\cdots n_d}{\min_i n_i}.$
\end{cor}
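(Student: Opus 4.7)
The statement follows almost mechanically from Claim \ref{exmon}, so the plan is short: reduce the rectangular case to the square case by collapsing every dimension to the minimum side length.

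First I would set $m := \min_i n_i$, and note that $m \le n_i$ for every $1 \le i \le d$, so we may legitimately invoke Claim \ref{exmon} with the choice $m_1 = m_2 = \cdots = m_d = m$. This yields
\begin{equation*}
ex_d(n_1\Times n_d, A) \le \frac{n_1}{m} \cdots \frac{n_d}{m}\, ex_d(m\Times m, A) = \frac{n_1\cdots n_d}{m^d}\, ex_d(m, A).
\end{equation*}

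Next, applying the hypothesis $ex_d(n, A) \le Kn^{d-1}$ to the square matrix of side $m$ gives $ex_d(m, A) \le Km^{d-1}$. Substituting and cancelling $m^{d-1}$ against $m^d$ produces
\begin{equation*}
ex_d(n_1\Times n_d, A) \le \frac{n_1\cdots n_d}{m^d}\cdot Km^{d-1} = K\frac{n_1\cdots n_d}{m} = K\frac{n_1\cdots n_d}{\min_i n_i},
\end{equation*}
which is exactly the claimed bound.

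There is no real obstacle here: the only thing to check is that the averaging inequality of Claim \ref{exmon} applies with all $m_i$ equal to $\min_j n_j$, and that the arithmetic of $m^d$ versus $m^{d-1}$ matches the $1/\min_i n_i$ factor in the conclusion. Both are immediate, so the corollary is essentially a one-line consequence of Claim \ref{exmon}.
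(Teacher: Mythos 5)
Your proof is correct and is exactly the paper's argument: apply Claim \ref{exmon} with $m_1=\cdots=m_d=\min_i n_i$ and then use the square-case hypothesis $ex_d(m,A)\le Km^{d-1}$. Nothing more is needed.
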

\begin{proof}
Apply Claim \ref{exmon} for $m_1=\ldots=m_d=\min n_i$.
\end{proof}

We also need to generalize the notion of a permutation matrix to higher dimensions.

\begin{defi}
A $d$-matrix $M$ of size $k^d$ is a {\em permutation $d$-matrix} if $|M|=k$ and
it contains exactly one $1$ in each axis-parallel hyperplane.
In other words, $\forall$ $1\le i_j\le k$ there is a unique $\ul i=(i_1,\ldots,i_d)$
such that $M(\ul i)=1$.
\end{defi}

The most important result about excluded permutation matrices is the theorem of Marcus-Tardos, which was conjectured by F\"uredi and Hajnal \cite{FurHaj} and shown by Klazar \cite{Klazar} to also imply the Stanley-Wilf conjecture.

\begin{thm}[Marcus-Tardos \cite{MarTar}]\label{marcustardos} 
If $A$ is a permutation matrix of size $k^2$, then $ex_2(n,A)\le 2k^4 \binom {k^2}{k} n=O(n)$.
\end{thm}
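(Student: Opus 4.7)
The plan is to follow the standard Marcus--Tardos block-partition strategy. Partition the $n\times n$ matrix $M$ into $(n/k^2)\times(n/k^2)$ blocks of size $k^2\times k^2$, and classify each block according to how rich it is in non-zero entries. Call a block \emph{wide} if its non-zeros occupy at least $k$ distinct columns of the block, \emph{tall} if they occupy at least $k$ distinct rows, and \emph{small} otherwise. I then bound the contribution of wide and tall blocks by a pigeonhole argument, and the contribution of small blocks by a recursion on a contracted matrix.

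The key pigeonhole step is that any vertical strip of width $k^2$ contains fewer than $k\binom{k^2}{k}$ wide blocks. For each wide block in the strip, fix a choice of $k$ of its columns that contain non-zeros; only $\binom{k^2}{k}$ such subsets exist, so with $k\binom{k^2}{k}$ wide blocks some $k$ of them---call them $B_1,\ldots,B_k$ from top to bottom---share a common column-set $\{c_1<\cdots<c_k\}$. Writing $\sigma$ for the permutation defining $A$, in each $B_i$ pick a non-zero entry in column $c_{\sigma(i)}$: the $k$ chosen entries lie in distinct rows (since the $B_i$ are vertically disjoint) and distinct columns (since $\sigma$ is a permutation), and together form the pattern $A$, contradicting $A$-freeness. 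An analogous bound holds for tall blocks in horizontal strips. Hence $M$ has at most $(n/k^2)\cdot k\binom{k^2}{k}$ wide blocks and the same for tall, each containing at most $k^4$ ones, contributing at most $2nk^3\binom{k^2}{k}$ non-zero entries in total.

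For small blocks, each contains at most $(k-1)^2$ non-zeros (non-zeros lie in at most $k-1$ rows and at most $k-1$ columns). To bound the number of non-empty blocks I pass to the contracted matrix $M^{\ast}$ of size $(n/k^2)\times(n/k^2)$ with $M^{\ast}(i,j)=1$ iff block $(i,j)$ is non-empty. Any occurrence of $A$ in $M^{\ast}$ lifts to an occurrence of $A$ in $M$ by picking any non-zero from each selected block (rows and columns from distinct blocks remain distinct), so $M^{\ast}$ is itself $A$-free. Writing $f(n):=ex_2(n,A)$, this yields the recursion
\[
f(n) \le (k-1)^2\, f\!\left(n/k^2\right) + 2nk^3\binom{k^2}{k}.
\]
Plugging the ansatz $f(n)\le Cn$ into this reduces to $C(2k-1)/k^2 \ge 2k^3\binom{k^2}{k}$, which holds for $C=2k^4\binom{k^2}{k}$ whenever $k\ge 2$; the case $k=1$ is trivial, and non-integrality of $n/k^2$ is absorbed by rounding and the monotonicity of $ex_2$.

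The main obstacle is calibrating the definitions of \emph{wide} and \emph{tall} so that the pigeonhole step produces the specific permutation pattern $A$ rather than merely some $k\times k$ sub-configuration of ones. Requiring $k$ rich columns (rather than, say, $k$ non-zero entries) is precisely the hypothesis that allows the permutation $\sigma$ to be routed freely through the shared column-set to recover an honest copy of $A$; if one used a weaker richness condition, one would only obtain a weaker all-ones pattern, and the argument would collapse.
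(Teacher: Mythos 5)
Your proof is correct and is essentially the classical Marcus--Tardos argument with the stated constant $2k^4\binom{k^2}{k}$: the $k^2\times k^2$ block partition, the pigeonhole bound of fewer than $k\binom{k^2}{k}$ wide (resp.\ tall) blocks per block-column (resp.\ block-row) via a shared $k$-column-set through which the permutation $\sigma$ is routed, the $(k-1)^2$ bound for the remaining blocks, and the $A$-freeness of the contracted matrix yielding the linear recursion. The paper itself only cites this theorem rather than reproving it (its Section 3 proves the $d$-dimensional generalization by the same blocking scheme, with induction on dimension and Loomis--Whitney in place of the $(k-1)^2$ bound), so your argument coincides with the approach the paper builds on.
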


We need the following straightforward generalization of this result.

\medskip
\noindent{\bf Theorem \ref{hypermarcustardos}} (Klazar-Marcus \cite{KlaMar}){\bf.} {\em
If $A$ is a permutation $d$-matrix of size $k^d$, then $ex_d(n,A) =O(n^{d-1})$.}
\medskip

An equivalent reformulation of Definition \ref{posetdim} gives the following connection between permutation $d$-matrices and posets of dimension $d$.

\begin{obs}\label{poset2matrix} For every poset $P$ of size $k$ whose dimension is $d$, there is a permutation $d$-matrix $M_P$ of size $k^d$ whose $1$-entries are in bijection with the elements of the poset such that if $M_P(\ul i)$ is in bijection with $p\in P$ and 
$M_P(\ul i')$ is in bijection with $p'\in P$, then
$p<p' \Leftrightarrow \forall j\:\: i_j< i_j'.$ 
\end{obs}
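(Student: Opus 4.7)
The plan is to read Observation~\ref{poset2matrix} as a direct reformulation of Definition~\ref{posetdim}. Since $P$ has dimension $d$, by definition there exist $d$ linear orders $<_1,\ldots,<_d$ on $P$ such that for all $p,p'\in P$,
\[
p<_P p' \iff p<_j p' \text{ for every } 1\le j\le d.
\]
For each $j$, let $\sigma_j:P\to [k]$ be the bijection sending $p\in P$ to its rank in the linear order $<_j$ (so $\sigma_j(p)<\sigma_j(p')$ iff $p<_j p'$). I will use these bijections to place the $1$'s of the candidate matrix $M_P$.

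Next, I define $M_P$ on $[k]^d$ by $M_P(\underline i)=1$ iff there is some $p\in P$ with $\underline i=(\sigma_1(p),\ldots,\sigma_d(p))$, and $M_P(\underline i)=0$ otherwise. Since $P$ has $k$ elements, there are exactly $k$ entries equal to $1$, and these are in bijection with the elements of $P$. To verify that $M_P$ is a permutation $d$-matrix, I need to check that each axis-parallel hyperplane contains exactly one $1$. Fix an index $j$ and a value $a\in [k]$, and consider the hyperplane $\{\underline i : i_j=a\}$. A $1$-entry in this hyperplane corresponds to a $p\in P$ with $\sigma_j(p)=a$; since $\sigma_j$ is a bijection, there is exactly one such $p$, giving exactly one $1$ in the hyperplane, as required.

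Finally, the comparability condition is immediate from the construction: if $M_P(\underline i)$ corresponds to $p$ and $M_P(\underline i')$ corresponds to $p'$, then $i_j=\sigma_j(p)$ and $i_j'=\sigma_j(p')$, so $i_j<i_j'$ iff $p<_j p'$. Taking the conjunction over all $j$ and invoking the characterization of $<_P$ via $<_1,\ldots,<_d$ yields $p<_P p'\iff \forall j\: i_j<i_j'$, which is exactly the claim. There is no real obstacle here; the content is entirely in unpacking the definition of poset dimension and observing that the ranks in the $d$ linear extensions are the coordinates of a permutation $d$-matrix.
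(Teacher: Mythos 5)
Your proposal is correct and matches the paper's (implicit) argument: the paper states Observation~\ref{poset2matrix} as an immediate reformulation of Definition~\ref{posetdim}, and your construction via the rank maps $\sigma_j$ of the $d$ linear orders, together with the bijectivity check for each axis-parallel hyperplane, is exactly the standard way to unpack that reformulation. Nothing is missing.
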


Using this, we will prove our main lemma.

\medskip
\noindent{\bf Lemma \ref{matrix2poset}.} {\em
If $ex_d(n,M_P)\le Kn^{d-1}$,
then $La^{\#}(n, P) \le 2^dK {\binom{n}{\lfloor\frac{n}{2}\rfloor}}$.}
\medskip

\section{Proof of Lemma \ref{matrix2poset}}\label{sec:matrix2poset}
Before we start the proof, we need a definition.

\begin{defi}
A {\em permutation $d$-partition} $\Q := Q_1 | Q_2 | \ldots | Q_{d}$ is defined as an ordered partition of a permutation of the elements of $[n]$ into $d$ parts $Q_1, Q_2,\ldots, Q_d$. 
We denote the $i^{th}$ element of $Q_j$ by $Q_j(i)$.
The set of the form $Q_j[i):=\{Q_j(1),\ldots,Q_j(i-1)\}$ is called a {\em prefix} of $Q_j$ and
the set of the form $\Q[\ul i):=\bigcup_{j=1}^d Q_j[i_j)$ is called a {\em prefix union} of $\Q$. 
\end{defi}

An example of a {permutation $3$-partition} is $\Q=142|5|3$ and $\Q[3,1,2)=\{1,3,4\}$ is a prefix union of $142|5|3$.
Notice that since the order of the parts is respected, we consider, say, $\Q'=5|142|3$ as a different permutation $3$-partition, but of course the prefixes of $\Q$ and $\Q'$ are the same.
Some parts might also be empty in a permutation $3$-partition, as in $142||53$. 

The total number of possible permutation $d$-partitions is easily seen to be $(n+d-1)!/(d-1)!$, by taking all permutations of the elements of $[n]$ and the ${d-1}$ separators.\\ 

We are now ready to start the proof of Lemma \ref{matrix2poset}.
Let $\F$ be an induced $P$-free family of subsets of $[n]$.
We double count pairs $(\Q,F)$ where $F \in \F$ and $F$ is a prefix union of $\Q$.

First, let us fix a set $F \in \F$ and calculate the number of permutation $d$-partitions $\Q$ such that $F$ is a prefix union of $\Q$.

\begin{claim}\label{countP} Let $F \in \F$. Then,

\begin{equation*}
|\{ \Q\mid F \textit{ is a prefix union of } \Q \}|=
\frac{(\abs{F} +d-1)!}{(d-1)!} \cdot \frac{(n- \abs{F} +d-1)!}{(d-1)!} \\ =
\frac{(n+2d-2)!}{((d-1)!)^2\binom{n+2d-2}{\abs{F} +d-1}}.
\end{equation*}
\end{claim}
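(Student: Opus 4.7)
The plan is to count by setting up a clean bijection. First I would characterize exactly when $F$ is a prefix union of $\Q$: this happens if and only if in each part $Q_j$, every element of $F$ appears before every element of $[n]\setminus F$. Indeed, if this condition holds, we can take $i_j$ to be one more than the number of $F$-elements at the start of $Q_j$ and the union of the prefixes is exactly $F$; conversely, if some $Q_j$ had an $F$-element appearing after a non-$F$-element, then that $F$-element could never be included in any prefix of $Q_j$ that is contained in $F$, so $F$ could not be realized as a prefix union.

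Given this characterization, each valid $\Q$ admits a unique decomposition $Q_j = F_j\concatA G_j$, where $F_j$ is the ordered initial block of $F$-elements and $G_j$ is the ordered tail of non-$F$-elements. This exhibits a bijection between $\{\Q \mid F \text{ is a prefix union of } \Q\}$ and pairs consisting of an ordered partition $F_1 \mid F_2 \mid \cdots \mid F_d$ of $F$ into $d$ ordered sequences (possibly empty) together with an ordered partition $G_1 \mid G_2 \mid \cdots \mid G_d$ of $[n]\setminus F$ into $d$ ordered sequences.

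Next I would count each factor. The number of ordered partitions of a set of size $m$ into $d$ ordered sequences equals $(m+d-1)!/(d-1)!$: arrange the $m$ distinguishable elements together with $d-1$ indistinguishable separators in a row, which gives $(m+d-1)!$ total arrangements, divided by $(d-1)!$ for the interchangeable separators. Applying this with $m=|F|$ and $m=n-|F|$ and multiplying yields the first claimed equality
\[
\frac{(|F|+d-1)!}{(d-1)!}\cdot\frac{(n-|F|+d-1)!}{(d-1)!}.
\]

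Finally, the second equality is a direct algebraic manipulation: multiplying and dividing by $(n+2d-2)!$ and recognizing
\[
\binom{n+2d-2}{|F|+d-1}=\frac{(n+2d-2)!}{(|F|+d-1)!\,(n-|F|+d-1)!}
\]
rewrites the product as $(n+2d-2)!/\bigl(((d-1)!)^2 \binom{n+2d-2}{|F|+d-1}\bigr)$. There is no real obstacle here; the only subtle step is verifying the characterization and resulting bijection carefully, since in principle one could worry about multiple choices of $\ul i$ realizing the same $F$, but the block structure $Q_j = F_j \concatA G_j$ is uniquely determined by $\Q$ and $F$ alone.
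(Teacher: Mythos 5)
Your proof is correct and follows essentially the same approach as the paper: the paper also counts by separately arranging the elements of $F$ with $d-1$ separators and the elements of $[n]\setminus F$ with $d-1$ separators, then concatenating part by part, giving $\frac{(|F|+d-1)!}{(d-1)!}\cdot\frac{(n-|F|+d-1)!}{(d-1)!}$. Your version merely spells out the characterization of when $F$ is a prefix union and the resulting bijection more explicitly, which the paper leaves implicit.
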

\begin{proof}
Permute the elements of $F$ and $d-1$ separators $``|"$ in $(\abs{F}+d-1)!/(d-1)!$ ways.
Each such permutation is of the form $L_1|L_2|\ldots|L_{d}$.
Also permute the elements of $[n] \setminus F$ and $d-1$ separators $``|"$ in $(n-\abs{F}+d-1)!/(d-1)!$ ways.
Each such permutation is of the form $R_1|R_2|\ldots|R_{d}$.

Now, we concatenate $L_1|L_2|\ldots|L_{d}$ and $R_1|R_2|\ldots|R_{d}$ as $L_1R_1|L_2R_2|\ldots|L_{d}R_{d}$ to obtain a permutation $d$-partition for which $F$ is a prefix union. 
\end{proof}

Now, let us fix a $\Q = Q_1 | Q_2 | \ldots | Q_{d}$ and calculate the number of sets $F \in \F$ such that $F$ is a prefix union of $\Q$.
To do this, we first associate a $d$-matrix $M_{\Q}$ of size $(\abs{Q_1} + 1)\Times  (\abs{Q_d} + 1)$ to $\Q$ where $\abs{Q_j}$ denotes the length of $Q_j$.
This is done by setting $M_{\Q}(\ul i)=1$ if
the prefix union $\Q[\ul i) \in \F$
and $M_{\Q}(\ul i)=0$ otherwise.

\begin{claim}\label{countA}
If $ex_d(n,M_P)\le Kn^{d-1}$ and $\F$ is induced $P$-free,
then for any $\Q$ we have
$$|\{F\in\F\mid F \textit{ is a prefix union of } \Q\}|
\le K\left(\frac{n+d-1}{d-1}\right)^{d-1}=O(n^{d-1}).$$
\end{claim}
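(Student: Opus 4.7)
The plan is to show that $M_\Q$ is $M_P$-free, then invoke the hypothesis together with Corollary \ref{square2rectangle} to bound $|M_\Q|$. Since $F \mapsto $ its index vector (relative to $\Q$) is injective on prefix unions, thanks to the disjointness of the parts $Q_1,\ldots,Q_d$, the number of sets $F \in \F$ that are prefix unions of $\Q$ is exactly $|M_\Q|$.

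The heart of the argument is to verify that $M_\Q$ is $M_P$-free. Suppose for contradiction that some $d$-submatrix of $M_\Q$ contains $M_P$; this picks out, for each coordinate $j$, indices $1 \le a^j_1 < \cdots < a^j_k \le |Q_j|+1$, and embeds the $\ell$-th $1$-entry of $M_P$ (at position $\ul{i}^\ell$ corresponding to $p_\ell \in P$) into the position $\ul{b}^\ell := (a^1_{i^\ell_1}, \ldots, a^d_{i^\ell_d})$ of $M_\Q$. Each such position carries a $1$, meaning the prefix union $F_\ell := \Q[\ul{b}^\ell)$ lies in $\F$. I would then define $\alpha(p_\ell) := F_\ell$ and check the induced containment condition: by Observation \ref{poset2matrix}, $p_\ell <_P p_{\ell'}$ is equivalent to $i^\ell_j < i^{\ell'}_j$ for every $j$, and since the $a^j$'s are strictly increasing this is equivalent to $b^\ell_j < b^{\ell'}_j$ for every $j$. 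The key point that makes both directions work is that, because $M_P$ is a permutation $d$-matrix, all coordinates of any two distinct $1$-entries differ, so there is no intermediate case: whenever $p_\ell \not<_P p_{\ell'}$, some $b^\ell_j > b^{\ell'}_j$, witnessing a $j$-coordinate element of $F_\ell \setminus F_{\ell'}$ and preventing $F_\ell \subsetneq F_{\ell'}$. Thus $p_\ell <_P p_{\ell'}$ iff $F_\ell \subsetneq F_{\ell'}$, giving an induced copy of $P$ in $\F$ and a contradiction.

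Once $M_\Q$ is known to be $M_P$-free, I apply Corollary \ref{square2rectangle} to the $d$-matrix $M_\Q$ of size $(|Q_1|+1)\Times(|Q_d|+1)$ to obtain
\begin{equation*}
|M_\Q| \le K \cdot \frac{\prod_{j=1}^d (|Q_j|+1)}{\min_j (|Q_j|+1)}.
\end{equation*}
Since $\min_j (|Q_j|+1) \ge 1$, dropping the minimum factor leaves a product of $d-1$ positive integers whose sum is at most $\sum_j (|Q_j|+1) - 1 = n+d-1$. By AM-GM this product is maximized when the terms are equal, yielding the claimed bound
\begin{equation*}
|M_\Q| \le K\left(\frac{n+d-1}{d-1}\right)^{d-1} = O(n^{d-1}).
\end{equation*}

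The main obstacle, and the only delicate point, is the first step. One must confirm both directions of the induced-subposet condition when passing from $M_P$ inside $M_\Q$ to prefix unions inside $\F$; this is precisely where the permutation structure of $M_P$ (no two $1$'s share a coordinate hyperplane) is essential, since it rules out any ``tie'' in the coordinate comparison that would otherwise break the equivalence between coordinate dominance and strict set inclusion.
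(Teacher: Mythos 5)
Your proposal is correct and follows essentially the same route as the paper: show that a copy of $M_P$ in $M_\Q$ would yield an induced copy of $P$ among the prefix unions (using the equivalence between coordinatewise comparison and strict inclusion, where the permutation structure rules out ties), so $M_\Q$ is $M_P$-free, then apply Corollary \ref{square2rectangle} and an AM--GM estimate to get $K\left(\frac{n+d-1}{d-1}\right)^{d-1}$. You merely spell out the details (injectivity of $\ul i\mapsto\Q[\ul i)$, both directions of the induced condition) that the paper treats tersely.
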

\begin{proof}
Consider the permutation $d$-matrix $M_P$ of size $|P|^d$ defined in Observation \ref{poset2matrix}.
Notice that 
$\Q[\ul i')\subset \Q[\ul i)$ if and only if $\forall \:  1 \le j \le d : i_j'\le i_j$
and equality can hold only if $\ul i=\ul i'$.
From this it follows that if $M_{\Q}$ contains $M_P$, then the same relations hold in $\F$ and thus $\F$ contains an induced $P$, which is impossible.
Therefore $M_{\Q}$ is $M_P$-free. Using Corollary \ref{square2rectangle} and $ex_d(n,M_P)\le Kn^{d-1}$, we have 

\begin{multline*}
|\{F\in\F\mid F \textit{ is a prefix union of } \Q\}|=|M_{\Q}|
\le K\frac{(\abs{Q_1} + 1)\cdots (\abs{Q_d} + 1)}{\min_j (\abs{Q_j} + 1)}
\le K\left(\frac{n+d-1}{d-1}\right)^{d-1}.
\end{multline*}
\end{proof}

Now using Claims \ref{countP} and \ref{countA}, we have

\begin{multline*}
\sum_{F \in \F} {\frac{(n+2d-2)!}{((d-1)!)^2\binom{n+2d-2}{\abs{F}+d-1}}}\le
|\{(\Q,F)\mid F\in\F \textit{ prefix union of } \Q\}|\le
\frac{(n+d-1)!}{(d-1)!} K \left(\frac{n+d-1}{d-1}\right)^{d-1}
\end{multline*}

simplifying which, we get,

\begin{equation}
\frac{\abs{\F}}{\binom{n+2d-2}{\lfloor \frac{n}{2} \rfloor + d-1}} \le \sum_{F \in \F} {\frac{1}{\binom{n+2d-2}{\abs{F}+d-1}}}\le
K \frac{(d-1)!}{(d-1)^{d-1}}
\end{equation}

and since $\binom{n+2d-2}{\lfloor \frac{n}{2} \rfloor + d-1}\le 4^{d-1}\binom{n}{\lfloor \frac{n}{2} \rfloor }$, using Stirling's formula Lemma $2$ follows. 
In fact, we actually get the following stronger result:
$La^{\#}(n, P) = O(1.48^d  K{\binom{n}{\lfloor\frac{n}{2}\rfloor}})$.

\section{Proof of Theorem \ref{hypermarcustardos}}\label{sec:hypermarcustardos}
The proof is similar to the proof of Marcus and Tardos \cite{MarTar}, except that we use induction on $d$, just like Klazar and Marcus \cite{KlaMar}.
Surprisingly, even though both the Klazar-Marcus proof and our proof are a very natural generalization of the Marcus-Tardos proof, they are still quite different. Below we present our proof.

We assume by induction that for all smaller $d$ the following statement is true: Any $d$-matrix without some permutation $d$-matrix of size $k$ has at most $C_d n^{d-1}$ elements where $C_d$ denotes the smallest possible constant.
The theorem trivially holds for $d=1$.

Let $M$ be a $d$-matrix of size $n^d$ and $A$ a permutation $d$-matrix of size $k^d$.
If $S$ is a $d$-matrix, denote by $Proj_i S$ the $(d-1)$-matrix obtained by orthogonally projecting $S$ to the hyperplane orthogonal to the $i^{th}$ axis.
Notice that $Proj_i A$ is a permutation $(d-1)$-matrix of size $k^{d-1}$.

We partition $M$ into smaller $d$-matrices of size $s^d$ called {\em blocks} (for simplicity, suppose that $n$ is divisible by $s$) such that for every $1\le b_1,\ldots,b_d\le n/s$ the elements of block $S_{\ul b}$ are $M(\ul a)$ for $sb_i-s< a_i\le sb_i$. 
An {\em $i$-blockcolumn} is a series of blocks parallel to the $i^{th}$-axis, i.e., $\{S_{\ul b}\mid b_i=1,\ldots,n/s\}$.

A block $S$ is called {\em $i$-wide} if $Proj_i S$ contains $Proj_i A$ as a $(d-1)$-submatrix.
Using induction on the dimension, if this is not the case, $|Proj_i S| \le C_{d-1} s^{d-2}  =O(s^{d-2})$. 
If a block is not wide for any $i=1,\ldots,d$, we call it {\em thin}.

For the induction, we also need to use the following well-known inequality,
usually credited to Loomis and Whitney, which is in fact a simple corollary of the submodularity of entropy \cite{mathSE}.

\begin{lem}[Loomis-Whitney \cite{LooWhi}]
$|S|^{d-1}\le \Pi_{i=1}^d |Proj_i S|$.
\end{lem}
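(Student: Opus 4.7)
The plan is to prove the Loomis--Whitney inequality by induction on $d$, combining a slicing argument along one axis with H\"older's inequality. I will view the $0$-$1$ $d$-matrix $S$ as the set of its $1$-entries in $[n]^d$, so that $|S|$ is the cardinality and $|Proj_iS|$ is the size of the ordinary set-theoretic projection onto the hyperplane perpendicular to the $i^{\text{th}}$ axis.

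The base case $d=2$ is immediate: $S\subseteq Proj_2 S\times Proj_1 S$, so $|S|\le|Proj_1 S|\cdot|Proj_2 S|$. For the inductive step, I would assume the inequality in dimension $d-1$ and slice $S$ along the $d^{\text{th}}$ axis, writing $S_t=\{\ul i'\in[n]^{d-1}\mid (\ul i',t)\in S\}$, so that $|S|=\sum_t|S_t|$. Two easy identities will drive the induction. First, for every $i<d$ one has $|Proj_iS|=\sum_t|Proj_iS_t|$, because an element of $Proj_iS$ still carries the $d^{\text{th}}$ coordinate $t$, which distinguishes contributions from different slices. Second, $|S_t|\le|Proj_dS|$ for every $t$, since distinct points of $S_t$ yield distinct points of $Proj_dS$.

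Applying the inductive hypothesis to each slice gives $|S_t|^{d-2}\le\prod_{i=1}^{d-1}|Proj_iS_t|$. Multiplying by $|S_t|\le|Proj_dS|$ yields $|S_t|^{d-1}\le|Proj_dS|\prod_{i=1}^{d-1}|Proj_iS_t|$, so after taking the $(d-1)^{\text{th}}$ root and summing over $t$,
$$|S|\le|Proj_dS|^{1/(d-1)}\sum_t\prod_{i=1}^{d-1}|Proj_iS_t|^{1/(d-1)}.$$
H\"older's inequality with $d-1$ equal exponents $1/(d-1)$, applied to the functions $t\mapsto|Proj_iS_t|^{1/(d-1)}$, then gives $\sum_t\prod_{i=1}^{d-1}|Proj_iS_t|^{1/(d-1)}\le\prod_{i=1}^{d-1}\bigl(\sum_t|Proj_iS_t|\bigr)^{1/(d-1)}=\prod_{i=1}^{d-1}|Proj_iS|^{1/(d-1)}$. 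Substituting this bound in and raising to the $(d-1)^{\text{th}}$ power produces $|S|^{d-1}\le\prod_{i=1}^d|Proj_iS|$, completing the induction.

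There is no real obstacle: the only step demanding a moment of care is verifying the projection identity $|Proj_iS|=\sum_t|Proj_iS_t|$ for $i<d$ and matching up the H\"older exponents correctly, both of which are routine. As the authors already allude, a shorter route uses Shearer's entropy inequality: taking $X=(X_1,\ldots,X_d)$ uniform on $S$ and $X^{(i)}=(X_j)_{j\neq i}$, one has $\log|S|=H(X)$ and $H(X^{(i)})\le\log|Proj_iS|$, while Shearer gives $(d-1)H(X)\le\sum_i H(X^{(i)})$; however, the H\"older-induction proof above is entirely elementary and fits naturally alongside the rest of Section~\ref{sec:hypermarcustardos}.
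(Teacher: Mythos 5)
Your proof is correct. Both slicing identities hold: for $i<d$ an element of $Proj_i S$ retains its $d^{\text{th}}$ coordinate $t$, so $|Proj_i S|=\sum_t |Proj_i S_t|$, and distinct points of a slice $S_t$ project injectively into $Proj_d S$, giving $|S_t|\le |Proj_d S|$; combined with the inductive hypothesis and generalized H\"older these yield the inequality exactly as you write. One terminological slip: the H\"older exponents are $p_1=\cdots=p_{d-1}=d-1$ (so that $\sum_i 1/p_i=1$), not ``$1/(d-1)$'' --- the functions are raised to the power $1/(d-1)$, but the conjugate exponents themselves are $d-1$; your actual application is nonetheless the right one. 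Note that the paper does not prove this lemma at all: it cites Loomis and Whitney \cite{LooWhi} and remarks only that the inequality is a simple corollary of the submodularity of entropy \cite{mathSE} --- precisely the Shearer-type argument you sketch at the end. So your induction-plus-H\"older argument is a genuinely different and fully self-contained route: it buys elementarity (no entropy machinery, just counting and H\"older) at the cost of half a page, while the entropy route buys a one-line derivation and situates the lemma within the broader family of Shearer/Han inequalities. Either suffices for the role the lemma plays in Section~\ref{sec:hypermarcustardos}, where it is used only to bound thin blocks via $|S|\le (C_{d-1}s^{d-2})^{d/(d-1)}$.
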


If a block $S$ is thin, then using the above inequality, by induction we get
$|S|\le {(C_{d-1}s^{d-2})}^{\frac{d}{d-1}} = o(s^{d-1})$.
The number of $i$-wide blocks in an $i$-blockcolumn is at most $(k-1){\binom{s^{d-1}}{k}}$, because if $Proj_i A$ would occur $k$ times, in the same $k$ $i$-columns, then we could ``build'' a copy of $A$ from them (here we use that $A$ is a permutation $d$-matrix).

We define the $d$-matrix $M'$ of size $(n/s)^d$ as $M_{\underline i}'=1$ if and only if block 
$S_{\ul b}$ is thin. 
As $M'$ must be also $A$-free, we get the following bound by induction on $d$ and $n$, where $k$ is fixed.

\begin{multline}
|M|\le \sum_{S \textit{ is thin}} |S| +
\sum_{i=1}^d \sum_{\substack{BC\textit{ is an}\\ i \textit{-blockcolumn}}}
\sum_{\substack{S\in BC \textit{ is }\\ i \textit{-wide}}} |S|
\le \sum_{S \textit{ is thin}} o(s^{d-1}) +
\sum_{i=1}^d \sum_{\substack{BC\textit{ is an}\\ i \textit{-blockcolumn}}}
\sum_{\substack{S\in BC \textit{ is }\\ i \textit{-wide}}} s^d\\
\le |M'| o(s^{d-1}) +
\sum_{i=1}^d \sum_{\substack{BC\textit{ is an}\\ i \textit{-blockcolumn}}}
 (k-1){s^{d-1}\choose k} s^d
\le C_d(n/s)^{d-1}o(s^{d-1})+d(n/s)^{d-1} (k-1){s^{d-1}\choose k}s^d
\end{multline}

which for a sufficiently large $s$ is less than $(1-\delta)C_dn^{d-1} + s^{dk}n^{d-1}\le C_dn^{d-1}$. 
%
%
%
%
%
%
%

\section{Concluding Remarks and the Diamond}\label{sec:concluding}
Let $P$ be a poset.
A $0-1$ submatrix is a {\em $P$-pattern} if the usual partial order among the entries of the submatrix induces $P$.
E.g., $M_P$ is a $P$-pattern.
Notice that in the proof of Lemma \ref{matrix2poset}, we only used that $M_{\Q}$ is $M_P$-free, when in fact, $M_{\Q}$ avoids all $P$-patterns.
This might help to improve constant term of Theorem \ref{mainthm} for specific posets, as Theorem \ref{hypermarcustardos} gives a very weak bound (although there is hope that the bound of that theorem can be improved as well).

For example, denote by $D_2$ the {\em diamond} poset, the poset on $4$ elements whose elements are $a<b,c<d$ where $b$ and $c$ are incomparable.
G\'abor Tardos \cite{Diamond} proved that if all $16$ $D_2$-patterns are avoided, then an $n\times n$ matrix can have at most $4n$ one entries in it. 
Using this bound, and the analogue of Lemma \ref{matrix2poset} for all patterns, we have
$La^{\#}(n, D_2) \le 16 \binom{n}{\lfloor \frac{n}{2} \rfloor}$. 
If instead of $M_{\Q}$ we build a matrix in a slightly different way, for the diamond, we could improve this to $La^{\#}(n, D_2) \le 6 \binom{n}{\lfloor \frac{n}{2} \rfloor}$. 
However, this is still very far from the recent result of Lu and Milans \cite{Milans} which gives
$La^{\#}(n, D_2) \le (2.583+o(1)) \binom{n}{\lfloor \frac{n}{2} \rfloor}$.
For the non-induced case Kramer, Martin and Young \cite{KMY} proved the slightly better bound
$La(n, D_2) \le (2.25+o(1)) \binom{n}{\lfloor \frac{n}{2} \rfloor}$.
The conjecture is $La^{\#}(n, D_2) \le (2+ o(1)) \binom{n}{\lfloor \frac{n}{2} \rfloor}$.
%
We hope that using some different matrix instead of $M_{\Q}$, might improve our bounds to give sharp results.

\medskip

Lu and Milans have obtained bounds on $La^{\#}(n,P)$ by considering the {\em Lubell function} of $\F$ defined as $\sum_{F \in \F} {\frac{1}{\binom{n}{|F|}}}$.
They proposed the following strengthening of Conjecture \ref{conj}.

\begin{conj}[Lu-Milans \cite{Milans}]
\label{conj2}
For every poset $P$, there is a constant $C$ such that for every induced $P$-free $\F$ we have
$ \sum_{F \in \F} {\frac{1}{\binom{n}{|F|}}} \le C.$
\end{conj}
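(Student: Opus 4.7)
The plan is to refine the proof of Lemma~\ref{matrix2poset} to a ``shifted'' Lubell-type inequality and then convert it to the standard Lubell function by averaging over restrictions of $\F$.

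First, I would observe that the proof of Lemma~\ref{matrix2poset} produces more than the size estimate: the displayed inequality that appears just before the last step of that proof is the shifted LYM-type bound
\[
\sum_{F\in\F}\frac{1}{\binom{n+2d-2}{|F|+d-1}}\ \le\ K\cdot\frac{(d-1)!}{(d-1)^{d-1}}\ =:\ C_1(P),
\]
valid for every induced $P$-free $\F\subseteq 2^{[n]}$, where $d=\dim P$ and $K$ is the hypermatrix constant from Theorem~\ref{hypermarcustardos}. For $|F|\approx n/2$ the shifted binomial differs from $\binom{n}{|F|}$ only by a factor $\Theta(4^{d-1})$, so this already yields Conjecture~\ref{conj2} for the middle-layer portion of $\F$; the trouble is entirely at extreme set sizes, where the shifted binomial can be $\Theta(n^{d-1})$ times smaller than $\binom{n}{|F|}$ and consequently controls almost nothing of that contribution.

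To cover all sizes I would apply the shifted LYM inequality not to $\F$ itself but to its restrictions: for each $S\subseteq[n]$ with $|S|=m$, the subfamily $\F_S:=\{F\in\F:F\subseteq S\}$ is again induced $P$-free, so, viewed as a family on $S$, it satisfies $\sum_{F\in\F_S}1/\binom{m+2d-2}{|F|+d-1}\le C_1$. Averaging over a uniform $S\in\binom{[n]}{m}$ and using $\Pr[F\subseteq S]=\binom{m}{|F|}/\binom{n}{|F|}$ gives, for every $m$,
\[
\sum_{F\in\F}\frac{1}{\binom{n}{|F|}}\cdot\frac{\binom{m}{|F|}}{\binom{m+2d-2}{|F|+d-1}}\ \le\ C_1.
\]
A short calculation shows that the factor $\binom{m}{k}/\binom{m+2d-2}{k+d-1}$ is $\Theta(4^{-(d-1)})$ when $m\approx 2k$ and decays away from this peak. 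The final step would be to combine these inequalities over $m\in\{0,1,\ldots,n\}$ with weights $w_m$ satisfying $\sum_m w_m=O(1)$ and $\sum_m w_m\binom{m}{k}/\binom{m+2d-2}{k+d-1}\ge c_d>0$ uniformly in $k\in\{0,\ldots,n\}$; this would deliver $\sum_{F\in\F}1/\binom{n}{|F|}\le C_1/c_d$.

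The hard part is constructing such weights $w_m$. The Gaussian-like peak of $\binom{m}{k}/\binom{m+2d-2}{k+d-1}$ at $m\approx 2k$ has width only $\Theta(\sqrt k)$, and naive choices (uniform weights, or $w_m=1/\sqrt m$) give at best an $O(\sqrt n)$ bound on the Lubell function, because no single distribution on $m$ equalizes the gain across all sizes $k$ at constant total cost. I expect a successful proof will have to either (i) iterate the restriction argument, applying Lemma~\ref{matrix2poset} to a nested sequence of random sub-hypercubes so that each $k$ is probed at the correct scale, or (ii) replace the permutation $d$-partition structure of Lemma~\ref{matrix2poset} with a different $d$-dimensional chain structure whose prefix-union probabilities equal $1/\binom{n}{|F|}$ exactly, so that the hypermatrix machinery outputs the standard Lubell function directly without a binomial shift. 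Sets of bounded size (or bounded co-size) may additionally require a short separate induction on $n$, since each such layer contributes $\Theta(1)$ to the Lubell function while being heavily suppressed in every restriction inequality above.
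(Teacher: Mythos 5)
You have not proved Conjecture \ref{conj2} --- but neither does the paper: it is stated there as an open conjecture, and the authors explicitly remark that their method only yields the shifted bound $\sum_{F\in\F} 1/\binom{n+2d-2}{|F|+d-1} = O(1)$, which is exactly the inequality your first display correctly extracts from the proof of Lemma \ref{matrix2poset}; the obstruction they name --- sets with $|F|\le\alpha$ or $|F|\ge n-\alpha$, where the shifted term is a factor $\Theta(n^{d-1})$ smaller than the corresponding Lubell term --- is the same one you identify. Your restriction step is also sound: $\F_S$ is induced $P$-free as a family on $S$, and $\Pr[F\subseteq S]=\binom{m}{|F|}/\binom{n}{|F|}$, so your averaged inequality does hold for every $m$. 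The proposal stops being a proof exactly where you say it does: the weights $w_m$ are never constructed, and your options (i) and (ii) are research programs, not arguments.

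Moreover, the missing step is not merely unfinished; in the form you state it, it is provably impossible, though your quantitative analysis of why is off. One computes exactly
\begin{equation}
r_k(m):=\frac{\binom{m}{k}}{\binom{m+2d-2}{k+d-1}}=\frac{(k+1)\cdots(k+d-1)\,(m-k+1)\cdots(m-k+d-1)}{(m+1)\cdots(m+2d-2)}
=\Theta_d\left(\left(\frac{k(m-k)}{m^2}\right)^{d-1}\right),
\end{equation}
so the peak at $m\approx 2k$ has constant \emph{multiplicative} width (e.g.\ $r_k(m)=\Theta_d(1)$ for all $m\in[3k/2,\,4k]$), not additive width $\Theta(\sqrt k)$ as you claim; consequently weights supported on dyadic values of $m$ already give a Lubell bound of $O(\log n)$, better than your $O(\sqrt n)$ estimate. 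But $O(1)$ is out of reach for \emph{any} fixed weighting: for $d\ge 2$ and any fixed $m$, summing over dyadic $k=2^j$ gives $\sum_j r_{2^j}(m)=O_d(1)$ (a geometric series in $(2^j/m)^{d-1}$, plus $O(1)$ terms with $2^j$ near $m$), so if $\sum_m w_m\, r_k(m)\ge c_d$ held for every dyadic $k\le n/2$, summing these constraints over the $\Omega(\log n)$ scales and exchanging the order of summation would force $\sum_m w_m=\Omega_d(\log n)$. Any fixed convex combination of your restriction inequalities therefore yields at best $O(\log n)$, and closing the gap to a constant would need a genuinely new mechanism --- for instance your option (ii), a chain-like structure whose prefix-union probabilities are exactly $1/\binom{n}{|F|}$ --- which neither you nor the paper supplies; indeed the paper's own conjecture following this discussion is a guess at what such a sharpened structure might give, not a step toward one.
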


Unfortunately we could not establish this conjecture, as our results only imply that \\ $\sum_{F \in \F} {\frac{1}{\binom{n+2d-2}{\abs{F}+d-1}}} = O(1)$ because the terms of this sum corresponding to sets $F$ which are in the region $\abs{F} \le \alpha$ and $\abs{F} \ge n - \alpha$, for any fixed constant number $\alpha > 0$, can be arbitrarily smaller than the corresponding terms in the Lubell function. It is, however, quite interesting that the induced diamond-free family $\F$ for which Lu and Milans \cite{Milans} show that $\sum_{F \in \F} {\frac{1}{\binom{n}{|F|}}} \ge 2.28$, has only sets of size at most $3$. This construction is similar to the one given earlier by Griggs, Li and Lu \cite{GLL}, which gives a non-induced diamond-free family $\F$ for which $\sum_{F \in \F} {\frac{1}{\binom{n}{|F|}}} \ge 2.25$ and only uses sets of size at most $2$.
This prompts us to propose the following conjecture.

\begin{conj}
For every induced diamond-free $\F$, we have
$\sum_{F \in \F} {\frac{1}{\binom{n+2}{\abs{F}+1}}}\le \frac{1}{2}+o(1)$.
\end{conj}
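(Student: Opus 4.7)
The plan is to specialise the double counting of Lemma~\ref{matrix2poset} to $d=2$ and $P=D_2$ and combine it with a much sharper bound on the auxiliary matrices $M_\Q$ than the one supplied by the Marcus--Tardos machinery. For a permutation $2$-partition $\Q = Q_1\mid Q_2$ with $|Q_1|=r$ and $|Q_2|=s=n-r$, form the $(r+1)\times(s+1)$ matrix $M_\Q$ as in Section~\ref{sec:matrix2poset}; since $\F$ is induced $D_2$-free, $M_\Q$ avoids all $16$ diamond patterns by the observation at the start of Section~\ref{sec:concluding}. Double counting pairs $(\Q,F)$ with $F\in\F$ a prefix union of $\Q$ and invoking Claim~\ref{countP} with $d=2$, the conjecture is equivalent to the averaged bound
\[
\frac{1}{(n+2)!}\sum_{\Q}|M_\Q| \;\le\; \tfrac{1}{2}+o(1).
\]

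The tight example is $\F=\binom{[n]}{m}\cup\binom{[n]}{m+1}$ for $n=2m+1$: here the $1$'s of $M_\Q$ occupy exactly the two anti-diagonals $i+j\in\{m,m+1\}$ of the $(r+1)\times(s+1)$ grid, so $|M_\Q|$ equals $2\min(r+1,s+1)$ and
\[
\frac{1}{(n+2)!}\sum_\Q |M_\Q| \;=\; \frac{2}{(n+2)(n+1)}\sum_{r=0}^n \min(r+1,n-r+1) \;=\; \frac{m+2}{2m+3} \;\longrightarrow\; \tfrac{1}{2}.
\]
Thus the aim is to show that, across all induced $D_2$-free $\F$, the quantity $\sum_\Q|M_\Q|$ is asymptotically maximised by this two-middle-layer configuration.

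The hard part will be this sharp matrix bound. The $4n$ bound of Tardos quoted in the diamond discussion above is already asymptotically tight for arbitrary $D_2$-pattern-free matrices, and via Corollary~\ref{square2rectangle} it only yields $|M_\Q|=O(\max(r,s))$, which is of the wrong shape (and off by a large constant) for the target. Any successful argument must therefore exploit that $M_\Q$ is not a generic pattern-free matrix but rather the incidence of $\F$ with the grid of prefix unions of $\Q$, so in particular each row and each column of $M_\Q$ indexes a genuine chain in the Boolean lattice $2^{[n]}$. A plausible strategy is to argue row by row: if a row $i$ contains $1$'s that are flanked on both sides, in column order, by $1$'s in a second row $i' \ne i$, then one can extract an induced diamond in $\F$ whose two incomparable middle elements come from the two rows and whose common lower and upper bounds are supplied by the flanking structure. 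Quantifying this trade-off should push the pointwise bound from $O(\max(r,s))$ towards $2\min(r+1,s+1)$. Alternatively, as hinted by the authors themselves in the paragraph on the $6\binom{n}{\lfloor n/2\rfloor}$ bound, one may replace $M_\Q$ by a different auxiliary object more faithful to the height-$2$ shape of $D_2$ (for instance, one whose entries encode pairs of comparable elements of $\F$ consistent with $\Q$), and it is conceivable that matching the sharp constant $\tfrac{1}{2}$ only becomes possible through such a redesign.
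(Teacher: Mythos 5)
The statement you are addressing is not a theorem of the paper but one of its concluding open conjectures; the authors give no proof of it, so the only question is whether your argument stands on its own, and it does not. What you have done correctly is the bookkeeping: for $d=2$ the double count behind Lemma~\ref{matrix2poset} (via Claim~\ref{countP}) gives the exact identity $\sum_{F\in\F}1/\binom{n+2}{|F|+1}=\frac{1}{(n+2)!}\sum_{\Q}|M_\Q|$, and your computation that the two middle levels of $2^{[2m+1]}$ yield $\frac{m+2}{2m+3}\to\frac12$ is right, which explains where the constant $\frac12$ comes from. But this only reformulates the conjecture and verifies its tightness; the entire content --- an upper bound on $\sum_\Q|M_\Q|$ valid for every induced $D_2$-free family --- is deferred to ``the hard part,'' for which you offer only a speculative strategy. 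That is not a proof.

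Moreover, the specific strategy you sketch cannot work as stated. You propose to push the pointwise bound on $|M_\Q|$ from $O(\max(r,s))$ down to roughly $2\min(r+1,s+1)$. This pointwise bound is false: take $\F$ to be a maximal chain $\emptyset\subset\{x_1\}\subset\{x_1,x_2\}\subset\cdots\subset[n]$, which is trivially induced diamond-free, and take $\Q=Q_1|Q_2$ with $Q_1=x_1\cdots x_r$, $Q_2=x_{r+1}\cdots x_n$ in this order. Then the prefix unions lying in $\F$ form an L-shape (the full first column together with the full last row), so $|M_\Q|=n+1$, while $2\min(r+1,s+1)$ can be as small as $2$. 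The chain's total contribution to the Lubell-type sum is of course $o(1)$, but only because the large values of $|M_\Q|$ occur for few partitions $\Q$; so any correct argument must be an averaged one over $\Q$ (or must replace $M_\Q$ by a genuinely different auxiliary structure, as you also suggest), and no such averaged argument, nor any quantitative version of the ``flanking rows force an induced diamond'' idea, is carried out. As it stands the proposal is a plausible research plan, consistent with the paper's own framework, but the conjecture remains unproved.
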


In general, if we had a better matrix instead of $M_{\Q}$ that we could use, we could even hope for the following daring conjecture about our ``shifted'' Lubell function, which generalizes a conjecture of Griggs and Lu \cite{GLu}.
Let $e(P)$ (resp.\ $e^{\#}(P)$) be the maximum number $m$ such that for all $n$, the union of the $m$ middle levels of $2^{[n]}$ is $P$-free (resp.\ induced $P$-free).

\begin{conj}
If $P$ is a poset of dimension $d$, then for every $P$-free family $\F$ we have\newline
$\sum_{F \in \F} {\frac{1}{\binom{n+2d-2}{\abs{F}+d-1}}}\le \frac{e(P)}{4^{d-1}} +o(1)$ and thus $|\F|\le (e(P) + o(1))\binom{n}{\lfloor n/2\rfloor}$
and for every induced $P$-free $\F$ we have
$\sum_{F \in \F} {\frac{1}{\binom{n+2d-2}{\abs{F}+d-1}}}\le \frac{e^{\#}(P)}{4^{d-1}} +o(1)$ and thus $|\F|\le (e^{\#}(P) + o(1))\binom{n}{\lfloor n/2\rfloor}$.
\end{conj}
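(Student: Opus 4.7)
The plan is to replace the hypermatrix $M_{\Q}$ of Lemma \ref{matrix2poset} with a better construction, and to combine this with a sharp extremal bound of the form
\[
ex_d(n, M_P) \le \bigl(e(P)\cdot c_d + o(1)\bigr)\, n^{d-1}
\]
in the weak case, and an analogous $P$-pattern-avoidance version with $e^{\#}(P)$ in place of $e(P)$ in the induced case, where $c_d$ is precisely the constant that the double-counting subsequently absorbs into the $1/4^{d-1}$ factor appearing in the conjecture. Plugging such a sharp matrix estimate into the counting machinery of Section \ref{sec:matrix2poset} is what should yield both the Lubell-type inequality and the $|\F|$-bound.

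Concretely, I would adapt the skeleton of Lemma \ref{matrix2poset}: fix a $P$-free (resp.\ induced $P$-free) family $\F$ and count pairs $(\Q, F)$ with $F\in\F$ a prefix union of $\Q$. The left-hand side $\sum_F 1/\binom{n+2d-2}{|F|+d-1}$ of the conjecture already emerges on the $F$-side after dividing by the quantity in Claim \ref{countP}, so no modification is required there. On the $\Q$-side, however, Claim \ref{countA} loses roughly a factor of $(4/e)^d$ (the source of the $1.48^d$ slack noted at the end of Section \ref{sec:matrix2poset}) because Corollary \ref{square2rectangle} is applied with $\min_j(|Q_j|+1)$. To recover sharpness, I would reweight the enumeration of $\Q$'s to favor the balanced splits $|Q_1|\approx\cdots\approx|Q_d|\approx n/d$ and, more importantly, redefine $M_{\Q}$ so that it records the \emph{level} $|\Q[\ul i)|$ in addition to membership in $\F$; this lets one restrict to the concentrated middle band and apply the sharp matrix bound there, while contributions from tail levels fall into the $o(1)$.

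The main obstacle is producing the sharp hypermatrix extremal inequality. Even the base case $d=1$ is substantive: here $P$ is a chain $P_k$, $e(P_k)=k-1$, and the conjecture reduces to the LYM inequality together with Erd\H{o}s's sharp form of Theorem \ref{Erd}. For $d\ge 2$, the target is strictly stronger than prominent open problems: for $P=D_2$ (dimension $2$, $e^{\#}(D_2)=2$), the required bound $ex_2^{\#}(n,M_{D_2})\le (2+o(1))\,n$ would yield $La^{\#}(n,D_2)\le (2+o(1))\binom n{\lfloor n/2\rfloor}$, the still-open induced diamond conjecture highlighted in Section \ref{sec:concluding}. A plausible route is to sharpen G.~Tardos's pattern-avoidance argument \cite{Diamond} so that its constant matches $e^{\#}(P)$ rather than an unspecified $K$, and then lift to higher $d$ via the block-decomposition induction of Section \ref{sec:hypermarcustardos}, taking care that the wide-block overcount $(k-1)\binom{s^{d-1}}{k}$ contributes only to the lower-order term and does not spoil the leading constant.

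Once the sharp matrix bound is in hand, the `and thus' halves of the conjecture follow routinely: termwise estimation $\binom{n+2d-2}{|F|+d-1}\le (4^{d-1}+o(1))\binom n{\lfloor n/2\rfloor}$, exactly as in the last two lines of Section \ref{sec:matrix2poset}, turns the weighted sum bound $e(P)/4^{d-1}+o(1)$ (respectively $e^{\#}(P)/4^{d-1}+o(1)$) into $|\F|\le (e(P)+o(1))\binom n{\lfloor n/2\rfloor}$ (respectively with $e^{\#}(P)$).
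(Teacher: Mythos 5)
This statement is one of the paper's \emph{conjectures} (posed in Section \ref{sec:concluding} as a ``daring conjecture'' contingent on finding ``a better matrix instead of $M_{\Q}$''); the paper contains no proof of it, so your proposal cannot be measured against one. More importantly, what you have written is not a proof but a reduction of the conjecture to statements that are themselves unproven and, as you yourself note, strictly stronger than prominent open problems: the required sharp bound $ex_d(n,M_P)\le (e(P)c_d+o(1))n^{d-1}$ (or its $P$-pattern analogue with $e^{\#}(P)$) is not available for any $d\ge 2$, and already the case $P=D_2$ would settle the open induced diamond conjecture $La^{\#}(n,D_2)\le (2+o(1))\binom n{\lfloor n/2\rfloor}$, which the paper explicitly records as out of reach (the best known constant being $2.583$ by Lu and Milans). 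A proof that assumes an open problem as its ``main obstacle'' has not closed the gap; it has restated it.

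Two of your method-level repairs also do not work as described. First, the $(4/e)^d$-type loss in Claim \ref{countA} is intrinsic to the double count as set up: the enumeration in Claim \ref{countP} ranges over \emph{all} permutation $d$-partitions, including wildly unbalanced ones, and it is exactly this count that produces the shifted binomial $\binom{n+2d-2}{|F|+d-1}$ on the left-hand side. If you ``reweight the enumeration of $\Q$'s to favor balanced splits,'' you change Claim \ref{countP} as well, and the clean shifted Lubell sum you need on the $F$-side no longer emerges; you cannot modify one side of the double count without altering the other. Second, the claim that the block-decomposition induction of Section \ref{sec:hypermarcustardos} can be made to preserve a leading constant of $e^{\#}(P)$ is unfounded: the wide-block contribution there is $d(n/s)^{d-1}(k-1)\binom{s^{d-1}}{k}s^d = d(k-1)\binom{s^{d-1}}{k}\,s\, n^{d-1}$, which is itself $\Theta(n^{d-1})$ with an enormous constant depending on $s$ and $k$ --- it is a main term, not a lower-order one, and the recursion yields constants of Marcus--Tardos magnitude (compare $2k^4\binom{k^2}{k}$ in Theorem \ref{marcustardos}), nowhere near $e(P)$ or $e^{\#}(P)$. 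Your base case $d=1$ and the final ``and thus'' step (via $\binom{n+2d-2}{\lfloor n/2\rfloor+d-1}\le 4^{d-1}\binom n{\lfloor n/2\rfloor}$) are fine, but everything in between remains conjectural.
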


\section{Acknowledgements}
We thank Gyula O.H. Katona for introducing forbidden subposet problems to us and G\'abor Tardos for calculating the upperbound for Diamond patterns, almost overnight.
We would also like to thank Kevin Milans and Bal\'azs Patk\'os for useful discussions and Jan Kyncl for reading our manuscript and pointing out that Theorem \ref{hypermarcustardos} has been already proved by Klazar and Marcus in \cite{KlaMar}.


\end{document}